\newtheorem{question}{Question}[section]
\newtheorem{theorem}[question]{Theorem}
\newtheorem{lemma}[question]{Lemma}
\newtheorem{corollary}[question]{Corollary}
\newtheorem{definition}[question]{Definition}
\newtheorem{proposition}[question]{Proposition}
\newtheorem{example}[question]{Example}
\title{Selective versions of chain condition-type properties}
\author{Leandro F. Aurichi}
\address{Instituto de Ciencias Matematicas e de Computacao (ICMC-USP) \\ Universidade de Sao Paulo \\  Avenida Trabalhador Sao-carlense, 400 - Centro\\ 13566--590 Sao Carlos  - SP\\ Brazil}
\email{aurichi@icmc.usp.br}
\author{Santi Spadaro}
\address{Instituto de Matematica e Estatistica (IME-USP) \\ Universidade de Sao Paulo \\ 
Rua do Matao, 1010 - Cidade Universitaria \\  05508-090 Sao Paulo - SP \\ Brazil}
\email{santidspadaro@gmail.com}
\author{Lyubomyr Zdomskyy}
\address{Kurt Godel Research Center for Mathematical Logic\\ University of Vienna\\ W\"ahringer Strasse 25\\ A-1090 Wien\\ Austria}
\email{lzdomsky@gmail.com}
\subjclass[2000]{Primary: 54A25, 03E17, 91A44; Secondary: 54D35, 54D10}
\keywords{chain conditions, selectively ccc, selection principles, weakly Rothberger, topological games, cardinal inequality}
\begin{document}

\begin{abstract}
We study selective and game-theoretic versions of properties like the ccc, 
weak Lindel\"ofness and separability, giving various characterizations of them 
and exploring connections between these properties and some classical cardinal 
invariants of the continuum.

\end{abstract}

\maketitle

\section{Introduction}

Chain conditions provide a measure of how \emph{small} is a space, from a  topological point of view. For example, a space has the \emph{countable chain condition} (ccc) if it does not contain an uncountable family of pairwise disjoint non-empty open sets. This is one of the weakest chain conditions one can put on a space, while separability may be considered one of the strongest. Of course, every separable space satisfies the ccc.

Todorcevic \cite{T} surveys a wealth of chain conditions that are between the ccc and separability, elaborating on their \emph{classifying power}, in the sense that the better the space, the greater number of chain conditions it identifies. A few examples of this phenomenon are Knaster's result that separability and \emph{Knaster's property} (that is, every uncountable family of open sets contains an uncountable family where each pair of elements meets) are equivalent on ordered continua and Shapirovskii's result that separability and the \emph{Shanin condition} (that is, point-countable families of open sets are countable) are equivalent for compact spaces of countable tightness. These are all ZFC results, but the picture is even clearer in certain models of set theory: for example, the ccc and separability are equivalent for linearly ordered spaces under $MA_{\omega_1}$. Some of these characterizations even offer topological equivalents of certain combinatorial principles. Let us just mention Todorcevic and Velickovic's result that $MA_{\omega_1}$ is equivalent to the statement that every compact first-countable ccc space is separable.

In more recent times the framework of selection principles in mathematics and topological games has been applied to chain conditions offering more examples of the phenomenon underlined by Todorcevic. A typical selective chain condition is the one considered by the first-named author in \cite{A} which states that one can diagonalize a family with a dense union from a countable sequence of maximal pairwise disjoint families of non-empty open sets. Daniels, Kunen and Zhou introduced a game-theoretic strengthening of this property in \cite{DKZ} by using a two-player game where each player plays an inning per natural number and at a given inning, the first player picks a maximal pairwise disjoint open family, while the second player picks an open set from it. The second player wins if the set of all open sets he picked has dense union. If the second player has a winning strategy in this game on a given space $X$, then $X$ is selectively ccc, which in turn implies that $X$ is ccc. Unfortunately, there are countable spaces failing the selective versions of the ccc (see \cite{A}), so separability alone does not appear to play any role in this context. What should take the role of separability when dealing with selective chain conditions is countable $\pi$-weight. Recall that a \emph{$\pi$-base} for a space $X$ is a family $\mathcal{P}$ of non-empty open sets such that for every non-empty open $U \subset X$ there is $P \in \mathcal{P}$ such that $P \subset U$. The $\pi$-weight of a space is then the minimum cardinality of a $\pi$-base. It is easy to see that in every space with a countable $\pi$-base the second player has a winning strategy in the ccc game. The second-named author proved in \cite{BS} that this is actually equivalent to having a countable $\pi$-base for spaces with a countable local $\pi$-base at every point. A selective version and a game-theoretic version of separability were defined by Scheepers in \cite{SchSep} in a similar way. The former turns out to be equivalent to a countable $\pi$-base for compact spaces and the latter is equivalent to a countable $\pi$-base for all regular spaces. These results seem to suggest that the same classifying ability of traditional chain condition is shared by their selective versions, as long as one takes countable $\pi$-weight as the \emph{ultimate selective chain condition}.

Even though they measure the \emph{topological smallness} of a space, chain conditions seldom put any bound on its cardinality. Indeed, while a separable regular space cannot have cardinality larger than $2^{\mathfrak{c}}$, there are ccc spaces of arbitrarily large cardinality: it suffices to consider the Cantor Cube $2^\kappa$, where $\kappa$ is any cardinal. Nonetheless, chain conditions feature prominently in a few classical cardinal inequalities.  A result of Hajnal and Juh\'asz states that the cardinality of a ccc first countable space does not exceed the continuum. An interesting partial generalization of this theorem, that also generalizes Arhangel'skii's theorem on the cardinality of first-countable compacta, is due to Bell, Ginsburg and Woods. They proved that the cardinality of a first-countable weakly Lindel\"of normal space does not exceed the continuum. Weakly Lindel\"of means that every open cover has a countable subcollection with dense union, a condition which is easily seen to be satisfied by all ccc spaces. The question of whether normality can be relaxed to regularity in this result has remained open, but Angelo Bella gave a partial answer to it in \cite{BS} by considering the natural game-theoretic strengthening of the weak Lindel\"of property. In the third section of our paper we prove this game is equivalent to a sort of dual of Berner and Juh\'asz's classical point-picking game and exploit this equivalence to give a short proof to Bella's Theorem.

Another reason for our interest in game-theoretic strengthenings of chain conditions is that they provide an unexpected ZFC partial positive answer to the old problem of the productivity of the ccc. It was already known by Kurepa that the square of a Suslin Line is not ccc. Thus, consistently, the countable chain condition is not productive. Both Knaster's property and the Shanin condition are productive, and one can use these results along with the above mentioned equivalences to prove that under $MA_{\omega_1}$ the ccc is productive. In \cite{DKZ} Daniels, Kunen and Zhou proved in ZFC  that if player II has a winning strategy in the ccc game on every factor of an arbitrary product then it also has it on the full product.

In section 2 we deal with selective properties. In particular, we characterize the selective ccc on Pixley-Roy hyperspaces and we give a consistent topological characterization of $cov(\mathcal{M})$ by means of the weak Rothberger property.

In section 3 we concentrate on game versions. We prove preservation results in finite unions and products and give a characterization of the weak Rothberger game that we then exploit to prove cardinal inequalities in topological spaces. We then construct counterexamples proving the sharpness of our inequalities.

Given a space $X$ with topology $\tau$, we fix some notation about families of open covers and families of subsets of $X$.
\begin{itemize}
\item $\mathcal{O}=\{\mathcal{U}: \mathcal{U} \subset \tau \wedge \bigcup \mathcal{U}=X \}$.
\item $\mathcal{O}_D=\{\mathcal{U}: \mathcal{U} \subset \tau \wedge \overline{\bigcup \mathcal{U}}=X \}$.
\item $\Omega=\{\mathcal{U}: \mathcal{U} \subset \tau \wedge (\forall F \in [X]^{<\omega} \exists O \in \mathcal{U}: F \subset O \}$.
\item $\mathcal{D}= \{D \subset X: \overline{D}=X\}$.
\item $\mathcal{D}_O=\{O \in \tau: \overline{O}=X \}$.

\end{itemize}

An element of $\Omega$ is usually known as an \emph{$\omega$-cover} of $X$.

Let's recall some basic selection principles and two-person infinite games we will deal with in our paper. Let $\mathcal{A}, \mathcal{B} \subset \mathcal{P}(X)$.

\begin{itemize}
\item We say that $X$ satisfies $S^\kappa_1(\mathcal{A}, \mathcal{B})$ if for every sequence $\{A_\alpha: \alpha< \kappa \} \subset \mathcal{A}$, we can choose $B_\alpha \in A_\alpha$ such that $\{B_\alpha: \alpha < \kappa \} \in \mathcal{B}$.
\item We say that $X$ satisfies $S^\kappa_{fin}(\mathcal{A}, \mathcal{B})$ if for every sequence $\{A_\alpha: \alpha < \kappa \} \subset \mathcal{A}$, we can choose $B_\alpha \in [A_\alpha]^{<\omega}$ such that $\bigcup \{B_\alpha: \alpha < \kappa \} \in \mathcal{B}$.
\item We denote by $G^\kappa_1(\mathcal{A}, \mathcal{B})$ the two-person game in $\kappa$ many innings such that, at inning $\alpha<\kappa$, player one picks $A_\alpha \in \mathcal{A}$ and player two picks $B_\alpha \in A_\alpha$. Player two wins if $\{B_\alpha: \alpha < \kappa \} \in \mathcal{B}$.
\item We denote by $G^\kappa_{fin}(\mathcal{A}, \mathcal{B})$ the two-person game in $\kappa$ many innings such that, at inning $\alpha<\kappa$, player one picks $A_\alpha \in \mathcal{A}$ and player two picks $B_\alpha \in [A_\alpha]^{<\omega}$. Player two wins if $\bigcup \{B_\alpha: \alpha < \kappa \} \in \mathcal{B}$.
\end{itemize}

The properties $S^\omega_1(\mathcal{O}, \mathcal{O})$ and $S^\omega_{fin}(\mathcal{O}, \mathcal{O})$ are now known as \emph{Rothberger} and \emph{Menger} respectively. Moreover, we recall that $X$ satisfies $S^\omega_1(\Omega, \Omega)$ if and only if every finite power of $X$ is Rothberger.

\section{Selective versions}

In \cite{SchSep}, Marion Scheepers defined a natural selective version of separability which is now known as $R$-separability (see \cite{BBM}). The $R$ in the name comes, of course, from Rothberger.

\begin{definition}
A space is \emph{$R$-separable} if it satisfies $S^\omega_1(\mathcal{D}, \mathcal{D})$. In other words, for every sequence $\{D_n: n < \omega \}$ of dense sets there is a point $x_n \in D_n$, for every $n<\omega$ such that $\{x_n: n < \omega \}$ is dense in $X$.
\end{definition}

$R$-separability is much stronger than separability. For example, it implies that every dense set is separable, and by a result of Juh\'asz and Shelah \cite{JS}, this is equivalent to countable $\pi$-weight in the realm of compact spaces. Moreover, there are even examples of countable spaces that are not $R$-separable (see \cite{BBMadd} and \cite{SSS}).

In \cite{A} the first named author introduced the following weakening of $R$-separability, following a suggestion of Sakai.

\begin{definition}
We say that the space $X$ has property $S$ if it satisfies $S^\omega_1(\mathcal{D}_O, \mathcal{D})$, that is, for every sequence $\{O_n: n < \omega \}$ of open dense sets we can pick points $x_n \in O_n$ such that $\{x_n: n < \omega \}$ is dense.
\end{definition}

One of the most interesting features about property $S$ is that it lies strictly between $R$-separability and a natural selective version of the countable chain condition that was introduced by Scheepers in \cite{Schccc}.

\begin{definition}
We say that $X$ is \emph{selectively ccc} if it satisfies $S^\omega_1(\mathcal{O}_D, \mathcal{O}_D)$. In other words, for every sequence $\{\mathcal{U}_n: n < \omega \}$ of open families, such that $\bigcup \mathcal{U}_n$ is dense for every $n<\omega$ we can pick an open set $U_n \in \mathcal{U}_n$ such that $\bigcup \{U_n: n <\omega \}$ is dense.
\end{definition}

Obviously, every space having property S is selectively ccc, but the converse does not hold, since $2^\kappa$ is selectively ccc for every cardinal $\kappa$  (see \cite{A}), but fails to have property $S$ for $\kappa > \mathfrak{c}$ because it's not even separable then.

Let $X$ be a space and set $PR(X)=[X]^{<\omega}$. There is a natural topology on $PR(X)$ called the Pixley-Roy topology. A basic open neighbourhood of $F \in PR(X)$ is a set of the form $[F,U]=\{G \in PR(X): F \subset G \subset U \}$, where $F \subset U$ and $U \subset X$ is open in the topology on $X$.

While other selective properties have been characterized on $PR(X)$ (see, for example, \cite{Sak} and \cite{BCPT}), the problem of characterizing the selective ccc of $PR(X)$ has remained open. We introduce a new selection principle to provide such a characterization and then reduce the selective ccc of the Pixley-Roy hyperspace of a separable metrizable space $X$ to a well-known selective covering property of $X$.

\begin{definition}
Let $(X,\tau)$ be a topological space, and $\mathcal{F} \subset \{(F,U) \in [X]^{<\omega} \times \tau: F \subset U \}$. We call $\mathcal{F}$ an \emph{$\omega$-double cover} if for every pair $(G,V) \in [X]^{<\omega} \times \tau$ such that $G \subset V$ there is $(F,U) \in \mathcal{F}$ such that $F \subset V$ and $G \subset U$. The family of all $\omega$-double covers will be indicated with $\Omega^2$.
\end{definition}

\begin{proposition} \label{simpleprop}
$S^\omega_1(\Omega^2, \Omega^2) \Rightarrow S^\omega_1(\Omega, \Omega)$.
\end{proposition}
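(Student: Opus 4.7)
The plan is to pass from a sequence of $\omega$-covers to a sequence of $\omega$-double covers in the most economical way, apply the hypothesis, and then project back. Let $\{\mathcal{U}_n: n<\omega\}$ be a sequence of $\omega$-covers of $X$. The key idea is that an $\omega$-cover carries slightly less information than an $\omega$-double cover, so I can build an $\omega$-double cover out of a trivial ``padding'' of each $\mathcal{U}_n$, namely
\[
\mathcal{F}_n = \{(\emptyset, U): U \in \mathcal{U}_n\} \subset [X]^{<\omega}\times \tau.
\]

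First I would verify that each $\mathcal{F}_n \in \Omega^2$. Given any pair $(G,V)$ with $G \in [X]^{<\omega}$ and $G \subset V \in \tau$, the $\omega$-cover property of $\mathcal{U}_n$ yields some $U \in \mathcal{U}_n$ with $G \subset U$; then $(\emptyset, U) \in \mathcal{F}_n$ clearly satisfies $\emptyset \subset V$ and $G \subset U$, which is exactly what the definition of an $\omega$-double cover demands.

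Next I would apply $S^\omega_1(\Omega^2,\Omega^2)$ to $\{\mathcal{F}_n: n<\omega\}$ to obtain $(\emptyset, U_n) \in \mathcal{F}_n$ (equivalently, $U_n \in \mathcal{U}_n$) such that $\mathcal{F}=\{(\emptyset,U_n):n<\omega\} \in \Omega^2$. To conclude that $\{U_n: n<\omega\}\in \Omega$, I would specialize the $\omega$-double cover property of $\mathcal{F}$ to the pair $(G, X)$ for an arbitrary $G \in [X]^{<\omega}$: this produces some $n<\omega$ with $G \subset U_n$, so $\{U_n:n<\omega\}$ is an $\omega$-cover of $X$.

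I do not anticipate a real obstacle here; the only pedantic point is whether the convention for $[X]^{<\omega}$ in the definition of $\Omega^2$ allows $\emptyset$. If one insists on nonempty finite sets, a harmless workaround is to replace $\mathcal{F}_n$ with $\{(F,U): U \in \mathcal{U}_n,\ F \in [U]^{<\omega}\setminus\{\emptyset\}\}$ and, in the verification that $\mathcal{F}_n \in \Omega^2$, choose $U \in \mathcal{U}_n$ containing $G\cup\{v\}$ for some $v\in V$ (using the $\omega$-cover property on the finite set $G\cup\{v\}$) and take $F=\{v\}$; the concluding step is unchanged.
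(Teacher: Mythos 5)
Your proof is correct and follows essentially the same route as the paper, which simply observes that $\{(\emptyset,U): U \in \mathcal{U}_n\}$ is an $\omega$-double cover whenever $\mathcal{U}_n$ is an $\omega$-cover; you merely spell out the verification and the projection back, which the paper leaves implicit. No gap here, and the remark about the empty-set convention is a harmless aside.
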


\begin{proof}
Just note that if $\mathcal{U}_n$ is an $\omega$-cover then $\{(\emptyset,U): U \in \mathcal{U}_n \}$ is an $\omega$-double cover.
\end{proof}

\begin{theorem}
$PR(X)$ has $S^\omega_1(\mathcal{O}_D, \mathcal{O}_D)$ if and only if $X$ has $S^\omega_1(\Omega^2, \Omega^2)$.
\end{theorem}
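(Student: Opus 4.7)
The plan is to translate density of a union of basic open sets in $PR(X)$ into the $\omega$-double cover condition on $X$, after which both directions become essentially verification.

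The key observation is this: two basic open sets $[F,U]$ and $[G,V]$ in $PR(X)$ meet if and only if $F \cup G \subset U \cap V$, i.e. (given $F \subset U$ and $G \subset V$) if and only if $F \subset V$ and $G \subset U$. Hence if $\mathcal{U} \subset \tau_{PR(X)}$ consists entirely of basic open sets $[F,U]$, then $\bigcup \mathcal{U}$ is dense in $PR(X)$ if and only if, for every $(G,V) \in [X]^{<\omega}\times \tau$ with $G \subset V$, there is $[F,U] \in \mathcal{U}$ with $F \subset V$ and $G \subset U$; i.e.\ if and only if $\mathcal{F}:=\{(F,U):[F,U]\in \mathcal{U}\}$ is an $\omega$-double cover of $X$. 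So families in $\mathcal{O}_D(PR(X))$ (made of basic opens) correspond bijectively to elements of $\Omega^2$.

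For the direction $(\Leftarrow)$, given $\{\mathcal{U}_n:n<\omega\}\subset \mathcal{O}_D(PR(X))$, I would first refine each $\mathcal{U}_n$ to a family $\mathcal{U}_n^*$ consisting of basic open neighbourhoods $[F,U]$, each contained in some member of $\mathcal{U}_n$, whose union is still dense in $PR(X)$ (this is always possible because the basic open sets form a base). Define $\mathcal{F}_n=\{(F,U):[F,U]\in \mathcal{U}_n^*\}\in\Omega^2$, apply $S^\omega_1(\Omega^2,\Omega^2)$ to obtain $(F_n,U_n)\in \mathcal{F}_n$ with $\{(F_n,U_n):n<\omega\}\in \Omega^2$, and then choose any $V_n\in \mathcal{U}_n$ with $[F_n,U_n]\subset V_n$. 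By the key observation, $\bigcup_n [F_n,U_n]$ is dense in $PR(X)$, whence so is $\bigcup_n V_n$.

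For $(\Rightarrow)$, given $\mathcal{F}_n\in \Omega^2$, set $\mathcal{U}_n=\{[F,U]:(F,U)\in \mathcal{F}_n\}$. The same observation shows $\mathcal{U}_n\in\mathcal{O}_D(PR(X))$. Apply selective ccc to pick $[F_n,U_n]\in \mathcal{U}_n$ whose union is dense; then $\{(F_n,U_n):n<\omega\}\in\Omega^2$ as required.

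The only non-routine point is the initial translation lemma comparing intersection of basic open sets to the defining relations of $\Omega^2$; once this dictionary is set up, both directions are immediate. A minor technical care-point is the refinement to basic opens in the $(\Leftarrow)$ direction and the subsequent choice of an enclosing member from the original family $\mathcal{U}_n$, but this is standard and does not affect density.
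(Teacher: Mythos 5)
Your proof is correct and follows essentially the same route as the paper: translating density of a family of basic open sets in $PR(X)$ into the $\omega$-double cover condition via the observation that $[F,U]\cap[G,V]\neq\emptyset$ iff $F\subset V$ and $G\subset U$, and then running the selection in both directions. Your explicit refinement to basic open sets and the choice of an enclosing member of the original family just spells out the paper's ``without loss of generality'' step.
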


\begin{proof}
For the direct implication, let $\{\mathcal{F}_n: n <\omega \} \subset \Omega^2$. Now let $\mathcal{O}_n=\{[F,U]: (F,U) \in \mathcal{F}_n \}$. Then $\{\mathcal{O}_n: n < \omega \} \subset \mathcal{O}_D(PR(X))$. By the $S^\omega_1(\mathcal{O}_D, \mathcal{O}_D)$ property of $PR(X)$ we can find $[F_n, U_n] \in \mathcal{O}_n$ such that $\{[F_n, U_n]: n < \omega \} \subset \mathcal{O}_D$. We then have that $\{(F_n,U_n): n < \omega \} \in \Omega^2$. Indeed, let $(F,U) \in  \{(F,U) \in [X]^{<\omega} \times \tau: F \subset U \}$. Then $[F,U] \cap [F_n,U_n] \neq \emptyset$ for some $n$. So there is $H$ extending both $F$ and $F_n$ such that $H \subset U_n$ and $H \subset U$, and that implies $F \subset U_n$ and $F_n \subset U$.

Vice versa, suppose $X$ has $S^\omega_1(\Omega^2, \Omega^2)$ and let $\{\mathcal{U}_n: n <\omega \}$ be a sequence of open families with dense union in $PR(X)$. Without loss of generality we can assume that $\mathcal{U}_n$ is made up of basic open sets. Let now $\mathcal{F}_n=\{(F,U): [F,U] \in \mathcal{U}_n\}$. Then $\mathcal{F}_n \in \Omega^2$, for every $n<\omega$. Hence we can find $(F_n,U_n) \in \mathcal{F}_n$ such that $\{(F_n, U_n): n < \omega \} \in \Omega^2$. Now by the same argument $\{[F_n,U_n]: n < \omega \}$ has dense union in $PR(X)$.
\end{proof}

\begin{corollary}
If $PR(X)$ satisfies $S^\omega_1(\mathcal{O}_D, \mathcal{O}_D)$ (that is, $PR(X)$ is selectively ccc) then $X$ satisfies $S^\omega_1(\Omega, \Omega)$.
\end{corollary}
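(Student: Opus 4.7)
The plan is to derive this as a straightforward consequence of the two results immediately preceding it in the excerpt, namely the theorem characterizing selective ccc of $PR(X)$ via $S^\omega_1(\Omega^2, \Omega^2)$ on $X$, and Proposition \ref{simpleprop} which states $S^\omega_1(\Omega^2, \Omega^2) \Rightarrow S^\omega_1(\Omega, \Omega)$.

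So the first step is to apply the theorem: if $PR(X)$ has $S^\omega_1(\mathcal{O}_D, \mathcal{O}_D)$, then $X$ has $S^\omega_1(\Omega^2, \Omega^2)$. The second step is then to invoke Proposition \ref{simpleprop} to conclude that $X$ has $S^\omega_1(\Omega, \Omega)$.

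There is essentially no obstacle: the corollary is a direct chaining of the two implications. The only thing to double check is that the hypothesis direction in the theorem is the one we need (namely the forward implication "$PR(X)$ selectively ccc implies $X$ has $S^\omega_1(\Omega^2, \Omega^2)$"), which is indeed the direct implication proved first in the theorem. No auxiliary construction or verification beyond citing the two prior results is required.
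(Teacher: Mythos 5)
Your proposal is correct and matches the paper's (implicit) argument exactly: the corollary follows by chaining the forward implication of the preceding theorem with Proposition \ref{simpleprop}, which is precisely why the paper states it without further proof.
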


\begin{corollary} \label{opensub}
If $X$ satisfies $S^\omega_1(\Omega^2, \Omega^2)$ and $U \subset X$ is open then $U$ satisfies $S^\omega_1(\Omega^2, \Omega^2)$.
\end{corollary}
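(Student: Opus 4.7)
The plan is to reduce the problem on $U$ to the hypothesis on $X$ by extending each $\omega$-double cover of $U$ to an $\omega$-double cover of $X$ in a controlled way. Given $\{\mathcal{F}_n : n < \omega\} \subset \Omega^2$ on $U$, the first step is to form $\tilde{\mathcal{F}}_n = \mathcal{F}_n \cup \{(\{x\}, X) : x \in X \setminus U\}$ and check that $\tilde{\mathcal{F}}_n \in \Omega^2$ on $X$. For any $(G, W) \in [X]^{<\omega} \times \tau$ with $G \subset W$: if $G \subset U$, apply the $\omega$-double cover property of $\mathcal{F}_n$ to the pair $(G, W \cap U)$ (which lies in $[U]^{<\omega} \times \tau_U$ with $G \subset W \cap U$) to find $(F, V) \in \mathcal{F}_n$ with $F \subset W \cap U \subset W$ and $G \subset V$; if $G \not\subset U$, pick any $x \in G \setminus U \subset W$ and use the pair $(\{x\}, X)$. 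In either case this gives the required witness in $\tilde{\mathcal{F}}_n$.

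Next, I would invoke $S^\omega_1(\Omega^2, \Omega^2)$ on $X$ to obtain pairs $(F_n, V_n) \in \tilde{\mathcal{F}}_n$ with $\{(F_n, V_n) : n < \omega\} \in \Omega^2$ on $X$. For those indices $n$ at which $(F_n, V_n) \notin \mathcal{F}_n$ (equivalently, $(F_n, V_n)$ is one of the added trivial pairs), replace the selection by an arbitrary element of $\mathcal{F}_n$, which exists because $\mathcal{F}_n$ is an $\omega$-double cover and in particular nonempty; at the remaining indices, keep the original choice. Call the resulting selection $\{(F_n', V_n') : n < \omega\}$; by construction $(F_n', V_n') \in \mathcal{F}_n$ for every $n$, so this is a legal selection from the original sequence.

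The main step is to verify that $\{(F_n', V_n') : n < \omega\} \in \Omega^2$ on $U$. Given $(G, W) \in [U]^{<\omega} \times \tau_U$ with $G \subset W$, observe that $W$ is still open in $X$ because $U$ is open in $X$, so the pair $(G, W)$ is a legitimate test pair for the $\omega$-double cover property on $X$. This yields some $n$ with $F_n \subset W$ and $G \subset V_n$. Since $W \subset U$, the inclusion $F_n \subset W$ rules out the possibility that $(F_n, V_n) = (\{x\}, X)$ for some $x \notin U$, so $(F_n, V_n) \in \mathcal{F}_n$ and therefore $(F_n', V_n') = (F_n, V_n)$. Thus $F_n' \subset W$ and $G \subset V_n'$, completing the verification. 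The only delicate point is the case analysis in the first step — ensuring that adding only the singleton-to-$X$ pairs over $X \setminus U$ is enough to lift $\mathcal{F}_n$ to an $\omega$-double cover of the whole of $X$ — and once that is in hand, everything else is bookkeeping.
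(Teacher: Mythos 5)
Your argument is correct, but it follows a genuinely different route from the paper. The paper deduces the corollary from its characterization theorem: $X$ satisfies $S^\omega_1(\Omega^2,\Omega^2)$ iff $PR(X)$ is selectively ccc, and since $PR(U)=[\emptyset,U]$ is an open subspace of $PR(X)$ and the selective ccc passes to open subspaces, one translates back to get $S^\omega_1(\Omega^2,\Omega^2)$ for $U$. You instead work directly with $\omega$-double covers: pad each $\mathcal{F}_n$ on $U$ with the trivial pairs $(\{x\},X)$, $x\in X\setminus U$, to get an $\omega$-double cover of $X$, select on $X$, discard/replace the trivial picks, and check that any test pair $(G,W)$ over $U$ (whose $W$ is open in $X$ because $U$ is) can only be witnessed by a non-trivial pair since $F_n\subset W\subset U$. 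This is essentially the same padding-and-replacement trick that hides inside the paper's unproved assertion that open subspaces of selectively ccc spaces are selectively ccc, but carried out at the level of double covers; what your version buys is a self-contained, elementary proof that does not invoke the Pixley--Roy characterization at all, while the paper's version is shorter given that the characterization theorem has already been established. The only fussy point in your write-up is the degenerate test pair with $W\cap U=\emptyset$ (which forces $G=\emptyset$): under the paper's literal definition this is still a legitimate pair and your case analysis handles it, and it could alternatively be absorbed by the added pairs, so it is a matter of convention rather than a gap.
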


\begin{proof}
If $X$ satisfies $S^\omega_1(\Omega^2, \Omega^2)$ then $PR(X)$ is selectively ccc. Now $PR(U)=[\emptyset, U]$ is an open subspace of a selectively ccc space and hence it's selectively ccc. But then $U$ satisfies $S^\omega_1(\Omega^2, \Omega^2)$.
\end{proof}

\begin{theorem}
Let $X$ be a second-countable space. Then $X$ is $S^\omega_1(\Omega^2, \Omega^2)$ if and only if $X$ is $S^\omega_1(\Omega, \Omega)$.
\end{theorem}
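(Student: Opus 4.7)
The implication $S^\omega_1(\Omega^2, \Omega^2) \Rightarrow S^\omega_1(\Omega, \Omega)$ is Proposition \ref{simpleprop} and needs no hypothesis on $X$, so my plan concentrates on the converse. The rough idea is to diagonalize an arbitrary sequence of $\omega$-double covers in batches indexed by a countable base, with each batch responsible for handling those double-cover demands whose open part is the corresponding basic set. Second countability enters twice: it supplies such a base and it feeds the heredity of the Rothberger-type selection principle into open subspaces.

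Concretely, I would fix a countable base $\mathcal{B} = \{V_k : k < \omega\}$ of $X$ closed under finite unions (so every finite subset of any open $W$ sits inside some $V_k \subset W$), and split $\omega = \bigsqcup_k N_k$ into infinite pieces. For each $k$ and $n \in N_k$, form $\mathcal{U}_n^k := \{U : \exists F \subset V_k \text{ with } (F, U) \in \mathcal{F}_n\}$; the $\omega$-double cover property applied to the demand $(G, V_k)$ with $G \subset V_k$ finite shows $\mathcal{U}_n^k$ is an $\omega$-cover of the open subspace $V_k$. I would then apply $S^\omega_1(\Omega, \Omega)$ on $V_k$ to pick $U_n \in \mathcal{U}_n^k$ ($n \in N_k$) whose collection $\omega$-covers $V_k$, choose $F_n \subset V_k$ witnessing $(F_n, U_n) \in \mathcal{F}_n$, and then verify that the pooled family $\{(F_n, U_n) : n < \omega\}$ is an $\omega$-double cover: given a demand $(G, W)$, pick $V_k \in \mathcal{B}$ with $G \subset V_k \subset W$; the $N_k$-batch supplies $n$ with $G \subset U_n$, and $F_n \subset V_k \subset W$ holds by construction.

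The main technical point will be justifying that the open subspace $V_k$ inherits $S^\omega_1(\Omega, \Omega)$ from $X$. My plan is to invoke the equivalence of $S^\omega_1(\Omega, \Omega)$ with \emph{every finite power is Rothberger} (recalled in the introduction) together with the fact that $F_\sigma$-subsets of Rothberger spaces are Rothberger: in the (implicitly at least perfectly normal) second-countable setting every open set is $F_\sigma$, so each $V_k^m$ is $F_\sigma$ in the Rothberger space $X^m$, hence Rothberger, giving $S^\omega_1(\Omega, \Omega)$ for $V_k$. This is the only step where a hidden separation hypothesis would need to be made explicit; the diagonalization itself is completely formal once the heredity is in hand.
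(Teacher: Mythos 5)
Your argument is essentially the paper's: the same batching of the double covers over a countable base closed under finite unions via a partition of $\omega$ into infinite pieces, the same reduction of each batch to an $\omega$-cover of a basic open set, and the same final verification. The one step you single out --- that the open subspace $V_k$ inherits $S^\omega_1(\Omega, \Omega)$ --- is used silently in the paper's proof, and your justification via the finite-powers-Rothberger characterization together with $F_\sigma$-heredity of the Rothberger property is correct; it also makes explicit the regularity/metrizability hypothesis that the paper tacitly needs as well, which is harmless for the intended application to separable metrizable spaces.
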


\begin{proof}
The direct implication is clear by Proposition $\ref{simpleprop}$. For the converse implication, let $\{B_n: n < \omega \}$ be a countable base for $X$ which is closed under finite unions. Let $\{I_k: k < \omega \}$ be a partition of $\omega$ into infinite sets. Let $\{\mathcal{U}_n: n < \omega \} \subset \Omega^2$. We can assume without loss of generality that for every $n<\omega$ and for every $(F,U) \in \mathcal{U}_n$ there is $k<\omega$ such that $U \subset B_k$. Moreover, we can assume that for every $n< \omega$, and for every $(F,U) \in \mathcal{U}_n$, if $V$ is an open subset of $X$ such that $F \subset V \subset U$ then $(F,V) \in \mathcal{U}_n$. For every $n \in I_k$, define families $\mathcal{V}_n$ as follows:

$$\mathcal{V}_n=\{U: (F,U) \in \mathcal{U}_n \wedge U \subset B_k \}$$

Then $\mathcal{V}_n$ is an $\omega$-cover of $B_k$ and hence we can pick an element $U_n \in \mathcal{V}_n$ for every $n \in I_k$ such that $\{U_n: n \in I_k \}$ is an $\omega$-cover of $B_k$. Let $F_n \in [X]^{<\omega}$ be such that $F_n \subset U_n$ and $(F_n, U_n) \in \mathcal{U}_n$. We claim that $\mathcal{V}=\{(F_n,U_n): n < \omega \}$ is an $\omega$-double cover for $X$. Indeed, let $(G,V)$ be any pair, where $G \in [X]^{<\omega}$, $V \subset X$ is open and $G \subset V$. Let $k<\omega$ be such that $G \subset B_k \subset V$. Then we can find a $n \in I_k$ such that $G \subset U_n$. Now $F_n \subset U_n \subset B_k \subset V$ and hence $\mathcal{V}$ is actually an $\omega$-double cover for $X$.

\end{proof}

\begin{corollary}
Let $X$ be a separable metrizable space. Then $PR(X)$ is selectively ccc if and only if every finite power of $X$ is Rothberger.
\end{corollary}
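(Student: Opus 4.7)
The plan is to obtain the corollary as a straightforward concatenation of three equivalences already established or recalled in the paper. Since $X$ is separable and metrizable, it admits a countable base, so in particular it is second-countable; this is the only property of $X$ we actually need beyond those that feed into the preceding theorems.

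First, I would invoke the theorem that characterizes the selective ccc of the Pixley--Roy hyperspace: $PR(X)$ satisfies $S^\omega_1(\mathcal{O}_D,\mathcal{O}_D)$ if and only if $X$ satisfies $S^\omega_1(\Omega^2,\Omega^2)$. This translates the hypothesis on $PR(X)$ into the $\omega$-double-cover selection principle on $X$ itself. Next, because $X$ is second-countable, I would apply the theorem just proved, which asserts $S^\omega_1(\Omega^2,\Omega^2) \Leftrightarrow S^\omega_1(\Omega,\Omega)$ in that setting, to replace $\omega$-double covers by ordinary $\omega$-covers. Finally, I would invoke the classical equivalence recorded in the preliminaries, namely that $X$ satisfies $S^\omega_1(\Omega,\Omega)$ if and only if every finite power of $X$ is Rothberger.

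There is no genuine obstacle here: each link in the chain is either already available as a displayed result in the paper or is the standard characterization of $S^\omega_1(\Omega,\Omega)$ cited at the end of the introduction. The only point worth making explicit is that ``separable metrizable'' is being used solely to supply second-countability, which is the exact hypothesis required by the preceding theorem; the rest is a purely formal transitive closure of the three equivalences.
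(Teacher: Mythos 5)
Your proposal is correct and is exactly the argument the paper intends: the corollary is stated immediately after the two theorems precisely so that it follows by chaining $PR(X)$ selectively ccc $\Leftrightarrow S^\omega_1(\Omega^2,\Omega^2)$, then (using second-countability of a separable metrizable space) $S^\omega_1(\Omega^2,\Omega^2)\Leftrightarrow S^\omega_1(\Omega,\Omega)$, and finally the classical equivalence of $S^\omega_1(\Omega,\Omega)$ with all finite powers being Rothberger recalled in the introduction. Nothing further is needed.
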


The weak Lindel\"of property is a covering property that may be considered a chain condition, since it is a consequence of the ccc. We finish this section by considering the natural question of when a weakly Lindel\"of space satisfies the selective version of weak Lindel\"ofness, that is, $S^\omega_1(\mathcal{O}, \mathcal{O}_D)$. As in \cite{BPS}, we will call this property the \emph{weak Rothberger property}.

\begin{theorem}
Let $X$ be a weakly Lindel\"of space such that $\pi w(X) <cov(\mathcal{M})$. Then $X$ is weakly Rothberger.
\end{theorem}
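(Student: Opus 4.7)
The plan is to use the inequality $\pi w(X) < \mathrm{cov}(\mathcal{M})$ to dodge a small collection of meager sets in $\omega^\omega$, one for each element of a small $\pi$-base of $X$. Weak Lindel\"ofness will be used to reduce each open cover to a countable subfamily with dense union, so that the relevant choices at inning $n$ range over $\omega$, turning the selection problem into a problem about constructing a function $f \in \omega^\omega$ avoiding certain closed nowhere dense sets.

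More concretely, fix a $\pi$-base $\mathcal{P}$ for $X$ with $|\mathcal{P}| = \pi w(X) < \mathrm{cov}(\mathcal{M})$ and let $\{\mathcal{U}_n : n < \omega\}$ be a sequence of open covers. For each $n$, weak Lindel\"ofness yields a countable subfamily $\{U^n_k : k < \omega\} \subseteq \mathcal{U}_n$ whose union is dense in $X$. For every $P \in \mathcal{P}$, define
$$M_P = \{ f \in \omega^\omega : U^n_{f(n)} \cap P = \emptyset \text{ for all } n < \omega \}.$$
I would then verify that $M_P$ is closed nowhere dense in $\omega^\omega$: it is closed because it is an intersection of clopen sets each depending on a single coordinate, and it is nowhere dense because given any $s \in \omega^{<\omega}$ and any $n \geq |s|$, the density of $\bigcup_k U^n_k$ guarantees some $k$ with $U^n_k \cap P \neq \emptyset$ (since $P$ is a nonempty open set), so $s$ can be extended to a sequence outside $M_P$.

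Since $|\mathcal{P}| < \mathrm{cov}(\mathcal{M})$, the union $\bigcup_{P \in \mathcal{P}} M_P$ is not all of $\omega^\omega$, so pick $f \in \omega^\omega \setminus \bigcup_{P \in \mathcal{P}} M_P$. For this $f$, every $P \in \mathcal{P}$ meets $U^n_{f(n)}$ for some $n$, and since $\mathcal{P}$ is a $\pi$-base this forces $\bigcup_n U^n_{f(n)}$ to be dense in $X$. Setting $U_n = U^n_{f(n)} \in \mathcal{U}_n$ completes the weak Rothberger selection.

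The only step requiring real care is the nowhere-density verification for $M_P$, which hinges on the countable subfamily extracted via weak Lindel\"ofness still having dense union (so that each $P$ is met by some $U^n_k$), together with the simple observation that membership in $M_P$ is a coordinatewise condition. Everything else is a routine packaging of the ``$\pi$-base plus $\mathrm{cov}(\mathcal{M})$'' trick.
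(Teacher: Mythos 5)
Your proposal is correct and is essentially the paper's own argument: the sets $M_P$ are exactly the sets $N_\alpha$ used in the paper (indexed by a $\pi$-base of size $<\mathrm{cov}(\mathcal{M})$), the nowhere-density verification via extending a finite sequence by a coordinate $k$ with $U^{n}_k \cap P \neq \emptyset$ is the same, and the conclusion that a function avoiding all these sets yields a dense selection is identical. Nothing further is needed.
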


\begin{proof}

Recall that the space $\omega^\omega$ with its usual topology is homeomorphic to the irrationals and $cov(\mathcal{M})$ can be characterized as the least cardinal of a cover of the irrationals by nowhere dense sets.

Let $\{B_\alpha: \alpha < \kappa \}$ enumerate a $\pi$-base of $X$, for some $\kappa < cov(\mathcal{M})$. Let $\{\mathcal{U}_n: n < \omega \}$ be a sequence of open covers. Since $X$ is weakly Lindel\"of we can find a countable subcollection $\{U^n_k: k < \omega \}$ of $\mathcal{U}_n$ such that $\bigcup_{k<\omega} U^n_k$ is dense. Define $N_\alpha$ to be the following set: $$N_\alpha=\{f \in \omega^\omega: (\forall n)(B_\alpha \cap U^n_{f(n)}=\emptyset) \}$$

\noindent {\bf Claim:} The set $N_\alpha$ is nowhere dense in $\omega^\omega$ for every $\alpha < \kappa$.

\begin{proof}[Proof of Claim] It will suffice to prove that for every $\alpha < \kappa$, the set $\omega^\omega \setminus \overline{N_\alpha}$ is dense in $\omega^\omega$. Let $[\sigma]$ be a basic open set in $\omega^\omega$, where $\sigma \in \bigcup_{n<\omega} \omega^n$ and $[\sigma]=\{f \in \omega^\omega: f \supset \sigma \}$. Let $k=dom(\sigma)$ and pick $j<\omega$ such that $B_\alpha \cap U^k_j \neq \emptyset$. Let $\tau=\sigma \cup \{(k,j)\}$ and note that $[\tau] \subset [\sigma]$ and any function in the open set $[\tau]$ misses $N_\alpha$.
\renewcommand{\qedsymbol}{$\triangle$}
\end{proof}

Since $\kappa < cov(\mathcal{M})$ we can pick $f \notin \bigcup_{\alpha < \kappa} N_\alpha$. Then $\{U^n_{f(n)}: n < \omega \}$ is the selection showing that $X$ is weakly Rothberger. 
\end{proof}

\begin{theorem}
Under $cov(\mathcal{M})<\mathfrak{b}$ there is a compact space which is not weakly Rothberger and has $\pi$-weight $cov(\mathcal{M})$.
\end{theorem}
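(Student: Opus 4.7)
The plan is to construct a compact space $X$ of $\pi$-weight exactly $\kappa := cov(\mathcal{M})$ failing weak Rothberger, by running the argument of the previous theorem in reverse.

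First, I would exploit a Bartoszy\'nski-type refinement of $cov(\mathcal{M})$: one may choose a covering of $\omega^\omega$ by $\kappa$-many nowhere dense closed sets of product form, say $\omega^\omega = \bigcup_{\alpha < \kappa} \prod_{n<\omega} F_\alpha^n$ with $F_\alpha^n \subsetneq \omega$ (every nowhere dense closed subset of $\omega^\omega$ is contained in one of product form, so such a refinement follows from the definition of $cov(\mathcal{M})$). The hypothesis $\mathfrak{b} > \kappa$ then furnishes a single function $h \in \omega^\omega$ pointwise dominating enough of the auxiliary data so that the essential combinatorics take place inside the compact metrisable space $K := \prod_{n} (h(n)+1)$.

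Next, I would build $X$ as a compactification of $K$ enlarged by $\kappa$ isolated ``ideal'' points $\{p_\alpha : \alpha < \kappa\}$, one per index, plus a single limit point $\infty$ so as to keep $X$ compact. The topology is arranged so that the basic clopen ``$n$-th coordinate slices'' $V_{n,k} = \{x \in K : x(n) = k\}$, suitably extended through the $p_\alpha$'s, yield open covers $\mathcal{U}_n := \{U^n_k : k < \omega\}$ of $X$, and the singletons $\{p_\alpha\}$ together with $K$-basic opens form a $\pi$-base $\{B_\alpha : \alpha < \kappa\}$. The extension is designed so that the key intersection pattern $B_\alpha \cap U^n_k = \emptyset$ holds if and only if $k \in F_\alpha^n$.

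Having set this up, the failure of weak Rothberger is immediate: any selection $(U^n_{f(n)})_{n<\omega}$ corresponds to some $f \in \omega^\omega$, and $\bigcup_n U^n_{f(n)}$ meets $B_\alpha$ iff there is some $n$ with $f(n) \notin F_\alpha^n$, iff $f \notin \prod_n F_\alpha^n$. Density would therefore require $f \notin \bigcup_\alpha \prod_n F_\alpha^n = \omega^\omega$, which is impossible. Hence no selection produces a dense union. The $\pi$-weight is then exactly $\kappa$: the upper bound comes from $\{B_\alpha\}$, and the lower bound is forced by the previous theorem applied contrapositively, since $X$ is compact (hence weakly Lindel\"of) and fails weak Rothberger.

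The main obstacle is the second step, arranging the topology on $X$ so that it is simultaneously compact Hausdorff, exhibits the intended $\pi$-base-versus-cover intersection pattern, and retains $\pi$-weight $\kappa$ rather than collapsing. The hypothesis $cov(\mathcal{M}) < \mathfrak{b}$ is used precisely here, to bound the auxiliary family by a single $h$ and thereby realise the whole construction inside a fixed compact metrisable ``ground'' space $K$.
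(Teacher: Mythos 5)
Your combinatorial skeleton is the same as the paper's (a Bartoszy\'nski-type family turned into a sequence of covers indexed by coordinates, so that any selection $g$ is ``caught'' by some index $\alpha$), but the heart of the theorem is the step you leave undone, and the specific design you sketch cannot work. If the points $p_\alpha$ are isolated and accumulate only at the single point $\infty$, then every open set containing $\infty$ contains all but finitely many $p_\alpha$ (its complement is compact and consists of isolated points, hence is finite on the $p_\alpha$'s). On the other hand your intersection pattern forces $U^n_k$ to omit $\{p_\alpha : k \in F^n_\alpha\}$, and for the sets coming from a Bartoszy\'nski family, i.e.\ $F^n_\alpha=\omega\setminus\{f_\alpha(n)\}$, this omitted set has size $cov(\mathcal{M})$ for essentially every $k$; hence no $U^n_k$ can contain $\infty$ and $\mathcal{U}_n$ is not a cover of your compact $X$. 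Patching by adding extra members of $\mathcal{U}_n$ around $\infty$ reopens the possibility that a selection picks those members and ends up dense, so the failure of weak Rothberger is no longer guaranteed. This is exactly the difficulty the paper sidesteps by taking $X=\beta(\kappa)$ with $\kappa=cov(\mathcal{M})$ discrete: the hypothesis $cov(\mathcal{M})<\mathfrak{b}$ is used to bound a family with property (P) (for every $g$ there is $f_\xi$ with $f_\xi(n)\neq g(n)$ for all $n$) by a single function $b$, so that for each $n$ the partition $\{\{\xi<\kappa : f_\xi(n)=k\}: k<b(n)\}$ is \emph{finite} and therefore lifts to a finite clopen cover $\{U^n_k : k<b(n)\}$ of all of $\beta(\kappa)$ (every ultrafilter contains exactly one piece); if some selection $\bigcup_n U^n_{g(n)}$ were dense it would have to contain every isolated point $\xi<\kappa$, and property (P) yields a $\xi$ missed by every selected piece. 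The $\pi$-weight is exactly $\kappa$ since the isolated singletons form a $\pi$-base and must belong to any $\pi$-base; no elementary-submodel or contrapositive argument is needed.

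A secondary but genuine error: your parenthetical justification for the product-form covering is false. It is not true that every closed nowhere dense subset of $\omega^\omega$ is contained in a nowhere dense set of the form $\prod_n F_n$; for instance $\{f\in\omega^\omega : f(n+1)\le f(0)\ \text{for all}\ n\}$ is closed and nowhere dense but projects onto $\omega$ in every coordinate, so the only product containing it is $\omega^\omega$ itself. The covering you want does exist, but it comes from Bartoszy\'nski's characterization of $cov(\mathcal{M})$ (Lemma 2.4.2 of Bartoszy\'nski--Judah), because the nowhere dense sets $\{g : \forall n\ g(n)\neq f(n)\}=\prod_n(\omega\setminus\{f(n)\})$ associated with a (P)-family already have product form --- which is precisely the form in which the paper invokes it. Since you yourself flag that ``arranging the topology'' is the main obstacle, what you have is a correct reduction to a construction that is not carried out, and, as written, would fail; the theorem is really about producing that compact space.
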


\begin{proof}
Assume that $cov(\mathcal{M})<\mathfrak b$. We claim that $\beta (cov(\mathcal{M}))$ is the required example.

We say that $F \subset \omega^\omega$ satisfies property (P) if for every $g \in \omega^\omega$  there is $f \in F$ such that $f(n) \neq g(n)$ for every $n<\omega$.

\noindent {\bf Claim.} There is a family $F \subset \omega^\omega$ of cardinality $cov(\mathcal{M})$ satisfying property (P) such that $f(n) < b(n)$ for some $b \in \omega^\omega$ and every $f \in F$.

\begin{proof}[Proof of Claim]
By Lemma 2.4.2 of \cite{BaJu} there is a family $F' \subset \omega^\omega$ of cardinality $cov(\mathcal{M})$ satisfying (P). Let $\{h_\alpha: \alpha < cov(\mathcal{M}) \}$ be an enumeration of $F'$.

Since $cov(\mathcal{M}) < \mathfrak{b}$, we can fix a function $b \in \omega^\omega$ such that $h_\alpha <^* b$, for every $\alpha < cov(\mathcal{M})$. Now, for every $\alpha < cov(\mathcal{M})$ there is $n_\alpha < \omega$ such that $h_\alpha(n) < b(n)$ for every $n \geq n_\alpha$. For every $\alpha < cov(\mathcal{M})$, $n<\omega$ and $i<b(n)$ define:
$$h^i_\alpha(n)=
\begin{cases}
i & \mbox{if } n<n_\alpha \\
h_\alpha(n) & \mbox{if } n \geq n_\alpha
\end{cases}
$$

It's easy to see that $F=\{h^i_\alpha: i < b(n), \alpha < cov(\mathcal{M}) \}$ is a subfamily of $\omega^\omega$ satisfying (P) and such that $h(n) < b(n)$, for every $h \in F$.
\renewcommand{\qedsymbol}{$\triangle$}
\end{proof}

Let $\{f_\alpha: \alpha < cov(\mathcal{M})\}$ be an enumeration of $F$ and for every $n$ let us consider the following finite clopen cover of
$\beta(cov(M))$: $\mathcal U_n=\{U^n_k:k<b(n)\}$, where
$U^n_k = \beta(\{\xi<cov(\mathcal{M}) : f_\xi(n)=k \})$ and $\beta(A)$ is the set of all
ultrafilters on $cov(\mathcal{M})$ containing $A$. 
We claim that the sequence $\{ \mathcal U_n:n\in\omega \}$
witnesses that $\beta(cov(\mathcal{M}))$ fails to have $S^\omega_1(\mathcal O, \mathcal O_D)$.
Indeed, suppose that $\bigcup_{n\in\omega}U^n_{g(n)}$ is dense in $\beta(cov(\mathcal{M}))$
for some $g\in \omega^\omega$. Then, in particular, $cov(\mathcal{M})\subset \bigcup_{n\in\omega}U^n_{g(n)}$.

But since $F$ satisfies property (P), there exists $\xi<cov(\mathcal{M})$ such that
$f_\xi(n)\neq g(n)$ for all  $n$, which means that $\xi\not\in U^n_{g(n)}$ for all $n$, and that is a contradiction.
\end{proof}

\begin{corollary}
Assume $cov(\mathcal{M})<\mathfrak{b}$. Then $cov(\mathcal{M})$ can be characterized as the minimum $\pi$-weight of a non-weakly Rothberger weakly Lindel\"of space.
\end{corollary}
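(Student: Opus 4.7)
The plan is to combine the two immediately preceding theorems into matching bounds in each direction, obtaining the claimed cardinal characterization at once. The hypothesis $cov(\mathcal{M}) < \mathfrak{b}$ will be used only for producing the realizing example; the lower bound holds in ZFC.

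For the lower bound, I would invoke the first theorem by contraposition: if $X$ is a weakly Lindel\"of space with $\pi w(X) < cov(\mathcal{M})$, then $X$ is weakly Rothberger. Hence any weakly Lindel\"of space that fails $S_1^\omega(\mathcal{O}, \mathcal{O}_D)$ must satisfy $\pi w(X) \geq cov(\mathcal{M})$. This shows that $cov(\mathcal{M})$ is a lower bound on the $\pi$-weight of any non-weakly Rothberger weakly Lindel\"of space.

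For the matching upper bound, I would invoke the second theorem: under the assumption $cov(\mathcal{M}) < \mathfrak{b}$, the compact space $\beta(cov(\mathcal{M}))$ has $\pi$-weight $cov(\mathcal{M})$ and fails the weak Rothberger property. Since every compact space is weakly Lindel\"of, this is a weakly Lindel\"of, non-weakly Rothberger space of $\pi$-weight exactly $cov(\mathcal{M})$, so the infimum is attained and the minimum equals $cov(\mathcal{M})$. There is no substantial obstacle here; the corollary is essentially a restatement of the two theorems, with the role of the hypothesis $cov(\mathcal{M}) < \mathfrak{b}$ isolated to the witnessing example.
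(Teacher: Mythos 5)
Your proposal is correct and is exactly the intended argument: the paper states this corollary without proof as an immediate consequence of the two preceding theorems, using the first (in contrapositive form) for the ZFC lower bound and the second for the witnessing compact (hence weakly Lindel\"of) example of $\pi$-weight $cov(\mathcal{M})$ under $cov(\mathcal{M})<\mathfrak{b}$.
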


\begin{question}
Is it true in ZFC that $cov(\mathcal{M})$ is the minimum $\pi$-weight of a non-weakly Rothberger weakly Lindel\"of space?
\end{question}

\section{Game versions}
The property $S$ defined in the previous section has a natural game version.

\begin{definition}
We say that $X$ has property $S^+$ if the second player has a winning strategy in the game $G^\omega_1(\mathcal{D}_O, \mathcal{D})$. This is the two person game of countable length where at inning $n$ player one picks a dense open set $O_n$ and player two picks a point $x_n \in O_n$. Player two wins if the set of all points he picked is dense in $X$.
\end{definition}

\begin{lemma} \label{openlemma}
Property $S^+$ is hereditary for open sets.
\end{lemma}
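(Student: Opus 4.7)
The plan is to convert a winning strategy for player II on $X$ into one on $U$ by extending each dense open set $V$ in $U$ to the canonical dense open set $V\cup(X\setminus\overline{U})$ in $X$ and then simulating the game on $X$ in the background.

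Fix a winning strategy $\sigma$ for player II in $G^\omega_1(\mathcal{D}_O,\mathcal{D})$ on $X$. I would define a strategy $\sigma'$ for player II on $U$ as follows. Suppose player I on $U$ has played dense open sets $V_0,\dots,V_n$ of $U$. For each $i\le n$, let $W_i = V_i\cup(X\setminus\overline{U})$. Since $V_i$ is dense open in $U$ and $U$ is open in $X$, one checks that $W_i$ is open in $X$ and $\overline{W_i}\supseteq \overline{U}\cup (X\setminus\overline{U})=X$, so $W_i$ is dense open in $X$. Let $y_i=\sigma(W_0,\dots,W_i)$ be the response dictated by $\sigma$ in the shadow game on $X$. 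Define $\sigma'(V_0,\dots,V_n)=y_n$ if $y_n\in V_n$, and otherwise let $\sigma'(V_0,\dots,V_n)$ be any point of $V_n$.

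To verify that $\sigma'$ wins, observe first that since $y_n\in W_n=V_n\cup(X\setminus\overline{U})$ and $U\cap(X\setminus\overline{U})=\emptyset$, we have $y_n\in U$ iff $y_n\in V_n$. Write $A=\{n:y_n\in V_n\}$ and let $x_n=\sigma'(V_0,\dots,V_n)$; then $x_n=y_n$ for every $n\in A$. Because $\sigma$ is winning on $X$, the set $\{y_n:n<\omega\}$ is dense in $X$; consequently, for any non-empty open $O\subseteq U$ (which is then open in $X$), there is some $n$ with $y_n\in O\subseteq U$, forcing $n\in A$ and $x_n=y_n\in O$. Hence $\{x_n:n<\omega\}$ is dense in $U$, so $\sigma'$ is winning.

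There is no serious obstacle here: the only delicate point is the potential mismatch between the picks $y_n$ produced by the shadow strategy and the points one is allowed to play in the real game on $U$. This is neutralized by the observation that density of $\{y_n\}$ in $X$ forces density of $\{y_n\}\cap U$ in $U$, so the picks that fall outside $U$ are harmless — arbitrary substitutes from $V_n$ can replace them without spoiling the final density condition.
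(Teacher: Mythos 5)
Your proposal is correct and follows essentially the same route as the paper: extend each dense open subset $V$ of $U$ to the dense open set $V\cup(X\setminus\overline{U})$ of $X$, feed these to the winning strategy on $X$, copy its answer when it lands in $V$ and play arbitrarily otherwise, and note that density of the shadow picks in $X$ forces density of the picks falling in $U$. The paper's version is just a terser statement of this same argument (with a small typo, $Int(X\setminus W)$ standing for $Int(X\setminus U)=X\setminus\overline{U}$), so nothing further is needed.
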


\begin{proof}
Let $X$ be a space with property $S^+$ and $U \subset X$ be a non-empty open subset. Let $\tau$ be a winning strategy for player II in $G^\omega_1(\mathcal{D}_O, \mathcal{D})$ on $X$. Let $\sigma$ be the strategy assigning to the open dense subset $O$ of $U$ the point $\tau(O \cup Int(X \setminus W))$ if this last point is in $O$, and any point of $O$ otherwise. Then $\sigma$ is a winning strategy for player II in $G^\omega_1(\mathcal{D}_O, \mathcal{D})$ on $O$.
\end{proof}

The following fact is also clear.

\begin{lemma}
If $D$ is dense in $X$ and $D$ has property $S^+$ then $X$ also has property $S^+$.
\end{lemma}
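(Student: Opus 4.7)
The plan is to lift a winning strategy $\tau$ for player II on $D$ to a winning strategy $\sigma$ for player II on $X$ by intersecting each move of player I with $D$. The key topological observation is that if $O$ is open dense in $X$ and $D$ is dense in $X$, then $O \cap D$ is open dense in the subspace $D$. Openness in $D$ is automatic; for density in $D$, take a nonempty relatively open set $W \cap D$ of $D$, note that $W$ must be nonempty, use density of $O$ in $X$ to get $O \cap W \neq \emptyset$, and then use density of $D$ in $X$ on the nonempty open set $O \cap W$ to conclude $O \cap W \cap D \neq \emptyset$.

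With this in hand, I would define $\sigma$ on histories of player I in $X$ by the rule
\[
\sigma(O_0, O_1, \dots, O_n) \;=\; \tau(O_0 \cap D,\, O_1 \cap D,\, \dots,\, O_n \cap D).
\]
Since each $O_k \cap D$ is open dense in $D$, the sequence $(O_k \cap D)_{k<\omega}$ is a legal play by player I against $\tau$ on $D$, and $\tau$ returns a point $x_n \in O_n \cap D \subseteq O_n$, so $\sigma$ really is a legal strategy for II in the game on $X$.

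To see that $\sigma$ is winning, take any play $(O_n)_{n<\omega}$ by I against $\sigma$ in $X$ and let $x_n = \sigma(O_0,\dots,O_n)$. By assumption on $\tau$, the set $\{x_n : n < \omega\}$ is dense in $D$. Since $D$ is dense in $X$, density in $D$ upgrades to density in $X$: any nonempty open $V \subseteq X$ meets $D$ in a nonempty relatively open subset of $D$, which must then meet $\{x_n : n < \omega\}$. Thus $\{x_n : n<\omega\} \in \mathcal{D}$, and $\sigma$ witnesses property $S^+$ for $X$.

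There is no real obstacle here; the only point requiring a moment's care is the elementary verification that intersecting with $D$ preserves the ``open dense'' status from $X$ to $D$, which uses density of $D$ in $X$ in an essential way.
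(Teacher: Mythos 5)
Your proof is correct: intersecting player I's moves with $D$ gives legal dense open sets in $D$, and density in $D$ lifts to density in $X$. The paper states this lemma without proof (calling it clear), and your argument is precisely the routine verification that was intended.
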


\begin{proposition}
Property $S^+$ is preserved by finite unions.
\end{proposition}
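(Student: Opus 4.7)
The plan is to show that if $X = X_1 \cup X_2$ where both subspaces have property $S^+$, then $X$ also has $S^+$. My approach is to use the two preceding lemmas to reduce the problem to the case of a genuinely disjoint union of two open subsets with $S^+$, and then combine their winning strategies by interleaving along a partition of $\omega$ into two infinite sets.

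The first reduction is to replace each $X_i$ by the open set $U_i := \text{Int}(\overline{X_i})$. The intersection $X_i \cap U_i$ is open in $X_i$, hence has $S^+$ by Lemma~\ref{openlemma}, and it is dense in $U_i$ since $X_i$ is dense in $\overline{X_i} \supseteq U_i$; the preceding lemma (a dense $S^+$ subspace forces ambient $S^+$) then gives $S^+$ for $U_i$ itself. Moreover, $U_1 \cup U_2$ is dense in $X$: a non-empty open $W$ disjoint from it would, after being pruned first by $\overline{X_1}$ and then by $\overline{X_2}$ (each pruning step leaves a non-empty open set, since $W$ open and disjoint from $U_i = \text{Int}(\overline{X_i})$ cannot lie inside $\overline{X_i}$), shrink to a non-empty open subset of $X \setminus (X_1 \cup X_2) = \emptyset$.

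Next I would pass to a truly disjoint decomposition by setting $U_2' := U_2 \setminus \overline{U_1}$, which is open and has $S^+$ as an open subset of $U_2$ by Lemma~\ref{openlemma}. The disjoint union $U_1 \sqcup U_2'$ is still dense in $X$: if some non-empty open $W$ missed it, then in particular $W \cap U_1 = \emptyset$, and density of $U_1 \cup U_2$ would force $W \cap U_2$ to be a non-empty open subset of $\overline{U_1}$, contradicting density of $U_1$ in $\overline{U_1}$. Hence by the dense-subspace lemma it suffices to show $Y := U_1 \sqcup U_2'$ has $S^+$, and inside $Y$ the two pieces are clopen.

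For this final clopen-decomposition step, fix winning strategies $\tau_1, \tau_2$ for player II on $U_1$ and $U_2'$ together with a partition $\omega = I_1 \sqcup I_2$ into infinite sets. Any dense open subset $O$ of $Y$ restricts to a dense open subset of each $U_i$. At inning $n \in I_i$, player II plays $\tau_i$ applied to the sub-history $\langle O_m \cap U_i : m \in I_i, \, m \leq n \rangle$; the $I_i$-indexed innings then form a legitimate run of the $S^+$ game on $U_i$ in which $\tau_i$ is being followed, so II's picks along $I_i$ are dense in $U_i$, and the union of picks is dense in $Y$. The main obstacle is the initial reduction via interiors of closures and the density checks at each step; the interleaving itself is routine once the clopen decomposition is in hand.
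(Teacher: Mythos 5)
Your argument is correct and follows essentially the same route as the paper's: both proofs reduce to the sets $\mathrm{Int}(\overline{X_i})$ by combining the open-hereditary lemma with the dense-subspace lemma, and then interleave the two winning strategies along a partition of $\omega$ into two infinite sets. Your extra disjointification step $U_2'=U_2\setminus\overline{U_1}$ is harmless but unnecessary (the paper interleaves on the possibly overlapping pieces directly and checks that the closures of the two sets of picks cover $X$), and you should add a word for the degenerate case where $U_1$ or $U_2'$ is empty, where one simply follows the single remaining strategy.
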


\begin{proof}
Once this is proved for the union of two spaces, the result will follow easily by induction, so let $X$ be a topological space and $A$ and $B$ be subspaces with property $S^+$ such that $X=A \cup B$. If $Int(\overline{A}) \cap A=\emptyset$ then $B$ is dense in $X$ and we are done. Similarly, if $Int(\overline{B}) \cap B = \emptyset$ we are done. So we can assume that $Int(\overline{A}) \cap A$ and $Int(\overline{B}) \cap B$ are both non-empty subsets of $X$. By Lemma $\ref{openlemma}$ we can fix a winning strategy $\sigma_A$ for player two on $Int(\overline{A}) \cap A$ and a winning strategy $\sigma_B$ for player two on $Int(\overline{B}) \cap B$ in the game $G^\omega_1(\mathcal{D}_O, \mathcal{D})$. 
The set $Int(\overline{A}) \cup Int(\overline{B})$ is a dense open subset of $X$. Note that if $U$ is dense open in $X$ then $U \cap Int(\overline{A}) \cap A$ is dense open in $Int(\overline{A}) \cap A$ and $U \cap Int(\overline{B}) \cap B$ is dense open in $Int(\overline{B}) \cap B$. We are now going to define a strategy $\sigma$ for player two on $A \cup B$.

Suppose that in the first $n$ innings, player two played the following sequence of dense open sets $(U_i: i \leq n)$. 

If $n=2k$ for some $k<\omega$, we let
$$\sigma((U_i: i \leq n))=\sigma_A((U_{2i} \cap Int(\overline{A}) \cap A: i \leq k))$$

If $n=2k+1$ for some $k<\omega$, we let 
$$\sigma((U_i: i \leq n))=\sigma_B((U_{2i+1} \cap Int(\overline{B}) \cap B: i \leq k))$$

Note now that, by the definition of $\sigma_A$ and $\sigma_B$ we have that:

$$\overline{\{\sigma((U_i: i \leq n)): n < \omega\}}=$$
$$=\overline{\{\sigma_A((U_{2i}: i \leq k)): k < \omega \}} \cup \overline{\{\sigma_B((U_{2i+1}: i \leq k)): k <\omega \}}=$$
$$=\overline{Int(\overline{A}) \cap A} \cup \overline{Int(\overline{B}) \cap B}=X$$

So $\sigma$ is a winning strategy for player two in $G^\omega_1(\mathcal{D}_O, \mathcal{D}))$ on $X$ and we are done.
\end{proof}

In a similar way, one can prove the following propositions:

\begin{proposition}
If player two has a winning strategy in $G^\omega_{fin}(\mathcal{D}_O, \mathcal{D})$ on $A_i$, for every $i \leq n$ then he also has a winning strategy in that game on $A_1 \cup A_2 \cup \dots \cup A_n$.
\end{proposition}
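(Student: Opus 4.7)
The plan is to mirror the proof of the previous proposition almost verbatim, since the only essential change when passing from $G^\omega_1$ to $G^\omega_{fin}$ is that player two now responds with a finite subset rather than a single point; the density argument at the end is insensitive to this difference. Before starting, I would record (and quickly verify) a version of Lemma~\ref{openlemma} for $G^\omega_{fin}(\mathcal D_O,\mathcal D)$: if $W\subset X$ is a non-empty open set and $\tau$ is a winning strategy for player two on $X$, then the strategy sending a dense open $O\subset W$ to $\tau(O\cup\mathrm{Int}(X\setminus W))\cap O$ (note this last set is non-empty because $\tau(O\cup\mathrm{Int}(X\setminus W))$ is a finite set meeting the dense open $O$ — if necessary, one works with $O\cup\mathrm{Int}(X\setminus W)$ and keeps only the part landing in $O$, supplementing with any point of $O$ if empty) is winning for player two on $W$.

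Next I would reduce to the case $n=2$ by an obvious induction and, given $X=A\cup B$ with each factor having a winning strategy, split into two cases exactly as in the $G^\omega_1$ argument. If $\mathrm{Int}(\overline A)\cap A=\emptyset$, then $B$ is dense in $X$, and the analog of the preceding lemma (a winning strategy on a dense subspace lifts to a winning strategy on the whole space, with the same proof) finishes the job; symmetrically if $\mathrm{Int}(\overline B)\cap B=\emptyset$. Otherwise both $A':=\mathrm{Int}(\overline A)\cap A$ and $B':=\mathrm{Int}(\overline B)\cap B$ are non-empty open subspaces of $A$ and $B$ respectively, so the preliminary open-hereditary lemma gives winning strategies $\sigma_A$ and $\sigma_B$ for player two in $G^\omega_{fin}(\mathcal D_O,\mathcal D)$ on $A'$ and $B'$.

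Now I would define a combined strategy $\sigma$ on $X$ exactly along the even/odd partition used in the $G^\omega_1$ case: given that player one has played the sequence $(U_i:i\le n)$ of dense open subsets of $X$, set
\[
\sigma((U_i:i\le 2k))=\sigma_A((U_{2i}\cap A':i\le k)),
\]
\[
\sigma((U_i:i\le 2k+1))=\sigma_B((U_{2i+1}\cap B':i\le k)),
\]
noting that $U_{2i}\cap A'$ is dense open in $A'$ and $U_{2i+1}\cap B'$ is dense open in $B'$. Each $\sigma$-move is a finite subset of the corresponding $U_i$, as required.

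For the verification, since $\sigma_A$ is winning on $A'$, the union of the responses on even innings has closure (in $A'$, equivalently in $\overline{A'}$) equal to $\overline{A'}=\overline A$; similarly the odd-inning responses produce a set dense in $\overline B$. Hence the union of all of player two's finite moves is dense in $\overline A\cup\overline B=X$, so $\sigma$ is winning. There is no real obstacle: the only thing to be careful about is the preliminary open-hereditary lemma, where one must check that the finite set $\tau(O\cup\mathrm{Int}(X\setminus W))$ actually intersects $O$ in enough points to witness density at the end — but this is immediate from $O$ being dense open in $W$ combined with the fact that $\tau$ is winning on $X$, so the union over all innings of the chosen finite sets is dense in $X$ and hence meets every non-empty open subset of $W$.
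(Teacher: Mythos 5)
Your plan is exactly the paper's: the paper offers no separate proof of this proposition, stating only that it is proved ``in a similar way'' to the union result for $G^\omega_1(\mathcal{D}_O,\mathcal{D})$, and your even/odd interleaving of strategies on $A'=\mathrm{Int}(\overline{A})\cap A$ and $B'=\mathrm{Int}(\overline{B})\cap B$, preceded by a fin-version of the open-hereditary lemma, is precisely that adaptation. Two justifications in your write-up are wrong as stated, though both are locally repairable. First, in the open-hereditary lemma the parenthetical claim that $\tau(O\cup\mathrm{Int}(X\setminus W))$ is non-empty ``because it is a finite set meeting the dense open $O$'' is false: a finite set need not meet a dense open set at any single inning. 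Your fallback is the correct argument and should replace it: keep $\tau(\,\cdot\,)\cap O$ (possibly empty, or padded by an arbitrary point of $O$), and observe at the end that the union over all innings of the simulated responses is dense in $X$, while any point of that union lying in $W$ lies in the corresponding $O_n$; hence the kept parts have union dense in $W$.

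Second, in the final verification you assert $\overline{A'}=\overline{A}$. This is false in general and is not excluded by the case analysis: take $X=\mathbb{R}$, $A=\{0\}\cup[1,2]$, $B=\mathbb{R}\setminus\{0\}$, so that $A'=(1,2)\neq\emptyset$ and $\overline{A'}=[1,2]\neq\overline{A}$. What is true, and what the paper's model proof actually uses, is weaker but sufficient: $A'$ is dense in $\mathrm{Int}(\overline{A})$, $B'$ is dense in $\mathrm{Int}(\overline{B})$, and $\mathrm{Int}(\overline{A})\cup\mathrm{Int}(\overline{B})$ is dense open in $X$ (if a non-empty open $U$ is not contained in $\overline{A}$, then $U\setminus\overline{A}$ is a non-empty open subset of $B\subset\overline{B}$, hence meets $\mathrm{Int}(\overline{B})$; otherwise $U\subset\mathrm{Int}(\overline{A})$). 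Since the even-inning moves are dense in $A'$ and the odd-inning moves are dense in $B'$, the closure of their union contains $\overline{A'}\cup\overline{B'}\supset\mathrm{Int}(\overline{A})\cup\mathrm{Int}(\overline{B})$, which is dense in $X$; this gives the desired conclusion without the incorrect identification $\overline{A'}=\overline{A}$.
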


\begin{proposition}
The properties $S^\omega_1(\mathcal{D}_O, \mathcal{D})$ and $S^\omega_{fin}(\mathcal{D}_O, \mathcal{D}))$ are preserved by finite unions.
\end{proposition}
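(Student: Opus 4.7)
The strategy is to mimic the proof of Proposition 3.4, replacing ``winning strategy'' by ``iterated selection''; the principles $S^\omega_1(\mathcal{D}_O, \mathcal{D})$ and $S^\omega_{fin}(\mathcal{D}_O, \mathcal{D})$ are formally parallel (a single element versus a finite subset from each $\mathcal{U}_n$) and will be handled together. An obvious induction reduces the claim to two-fold unions $X = A \cup B$ with $A$, $B$ both satisfying the principle. First I would establish selection-principle analogues of Lemma \ref{openlemma} and its companion, namely that both principles are hereditary for open subspaces and pass from a dense subspace to the whole space. Both are routine: for open heredity, extend open dense sets $P_n$ of an open subspace $U \subset Y$ to open dense sets $P_n \cup (Y \setminus \overline{U})$ of $Y$, apply the principle in $Y$, and then restrict the output to $P_n$ (in the $S^\omega_1$ version, replace choices landing outside $\overline{U}$ by an arbitrary element of $P_n$; in the $S^\omega_{fin}$ version, simply drop them); the retained points all lie in $\overline{U}$ and witness density of the selection in $U$. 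Dense heredity is immediate since an open dense set of $X$ restricts to an open dense set of any dense subspace.

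Next I would split into cases exactly as in the proof of Proposition 3.4. If $Int(\overline{A}) \cap A = \emptyset$, then $Int(\overline{A}) = \emptyset$ (since $A$ is dense in $\overline{A}$), so $X \setminus \overline{A} \subset B$ is open dense in $X$, whence $B$ is dense in $X$; the dense-heredity lemma finishes. The case $Int(\overline{B}) \cap B = \emptyset$ is symmetric, so assume $A_0 := Int(\overline{A}) \cap A$ and $B_0 := Int(\overline{B}) \cap B$ are both nonempty. They are open subspaces of $A$ and $B$ respectively and thus satisfy the selection principle by open heredity. Given a sequence $\{O_n : n < \omega\}$ of open dense sets in $X$, observe that $O_{2k} \cap A_0$ and $O_{2k+1} \cap B_0$ are open dense in $A_0$ and $B_0$ respectively (density uses that $A$ is dense in $\overline{A}$, so $A_0$ is dense in the open set $Int(\overline{A})$). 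Apply the principle on $A_0$ to the even-indexed subsequence and on $B_0$ to the odd-indexed one, and interleave the results: this yields a legal selection from $\{O_n : n < \omega\}$ whose trace is dense in $A_0 \cup B_0$.

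What remains is to show $\overline{A_0} \cup \overline{B_0} = X$, which is the only mildly nontrivial topological input and the point I expect to require the most care. Since $X = A \cup B$ forces $X = \overline{A} \cup \overline{B}$, one applies the classical fact that in a two-element closed cover the union of the interiors is dense, giving that $Int(\overline{A}) \cup Int(\overline{B})$ is dense in $X$; combined with $A_0$ dense in $Int(\overline{A})$ and $B_0$ dense in $Int(\overline{B})$, we conclude $\overline{A_0} \cup \overline{B_0} \supseteq \overline{Int(\overline{A}) \cup Int(\overline{B})} = X$. This completes the argument simultaneously for both selection principles.
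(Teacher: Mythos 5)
Your proposal is correct and is essentially the paper's own argument: the paper proves this proposition "in a similar way" to the game-theoretic version (the $S^+$ union result), using the same reduction to two sets, the same degenerate-case analysis via $Int(\overline{A})\cap A$, the same even/odd interleaving, and the same density fact $\overline{Int(\overline{A})\cap A}\cup\overline{Int(\overline{B})\cap B}=X$. You have simply spelled out the selection-principle analogues of the open-heredity and dense-heredity lemmas, which is exactly what the paper's remark intends.
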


The natural game version of the weak Rothberger property, introduced in the end of the previous section is the game $G^\omega_1(\mathcal{O}, \mathcal{O}_D)$. This is tightly connected to the dual version of the point-picking game studied by Berner and Juh\'asz in \cite{BJ}.

\begin{definition}
Let $G^p_o(\kappa)$ be the following game. At inning $\alpha < \kappa$, player one picks a point $x_\alpha \in X$ and player two picks an open set $U_\alpha$ such that $x_\alpha \in U_\alpha$. Player one wins if  and only if $\bigcup_{\alpha < \kappa} U_\alpha$ is dense in $X$.
\end{definition}

We may call this the \emph{open-picking game}. We prove that the game $G^\kappa_1(\mathcal{O}, \mathcal{O}_D)$ and the game $G^p_o(\kappa)$ are \emph{dual}, in the following sense.

\begin{theorem} \label{thmequiv} {\ \\}
\begin{enumerate}
\item \label{playerone} Player one has a winning strategy in $G^p_o(\kappa)$ if and only if player two has a winning strategy in $G^\kappa_1(\mathcal{O}, \mathcal{O}_D)$.
\item \label{playertwo} Player two has a winning strategy in $G^p_o(\kappa)$ if and only if player one has a winning strategy in $G^\kappa_1(\mathcal{O}, \mathcal{O}_D)$.
\end{enumerate}
\end{theorem}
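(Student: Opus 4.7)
I plan to prove both (1) and (2) by mutual simulation of strategies between the two games, exploiting that the winning conditions match between corresponding players: density of the union in (1) and its negation in (2).

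For the ``easy'' directions $\sigma \mapsto \tau$ of (1) and $\tau' \mapsto \sigma'$ of (2), the simulation is immediate. Given $\sigma$ winning for Player I in $G^p_o(\kappa)$, I would let $\tau(\mathcal V_0,\ldots,\mathcal V_\alpha)$ be any element of the cover $\mathcal V_\alpha$ containing $x_\alpha := \sigma(V_0,\ldots,V_{\alpha-1})$ (with the $V_\beta$'s defined recursively by the same recipe); such an element exists since $\mathcal V_\alpha$ covers $X$, and the sequence $(V_\beta)_{\beta<\kappa}$ is a legitimate Player II response to $\sigma$ in $G^p_o$, whence $\bigcup V_\beta$ is dense. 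Symmetrically, given $\tau'$ winning for Player I in $G^\kappa_1(\mathcal O,\mathcal O_D)$, I would define $\sigma'(\mathrm{hist},x_\alpha)$ to be any element of the cover $\tau'(\mathrm{hist})$ containing $x_\alpha$, producing a legitimate $G^\kappa_1$ play against $\tau'$.

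For the direction $\sigma' \mapsto \tau'$ of (2), the key construction is the point-indexed cover
\[
\mathcal V_\alpha \;=\; \{\sigma'(\mathrm{hist},x) : x\in X\},
\]
which is an open cover of $X$ because $x \in \sigma'(\mathrm{hist},x)$ for every $x$. Any Player II selection $V_\alpha \in \mathcal V_\alpha$ in $G^\kappa_1$ is of the form $\sigma'(\mathrm{hist},x_\alpha)$ for some $x_\alpha$ (chosen arbitrarily if ambiguous), producing a legitimate simulated $G^p_o$ play against $\sigma'$ and forcing the union $\bigcup V_\beta$ to be not dense.

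The main obstacle will be the remaining direction $\tau \mapsto \sigma$ of (1): from $\tau$ winning for Player II in $G^\kappa_1(\mathcal{O},\mathcal{O}_D)$, build $\sigma$ winning for Player I in $G^p_o$. Here $\tau$ outputs an open set $V_\alpha$, but Player I of $G^p_o$ must commit to a point $x_\alpha$ and is then subject to whatever open $U_\alpha \ni x_\alpha$ Player II plays next, so density of $\bigcup V_\alpha$ does not automatically transfer to density of $\bigcup U_\alpha$. I plan to dualize the point-indexed cover by having $\sigma((U_\beta:\beta<\alpha))$ return a point inside $\tau(\mathcal V_0,\ldots,\mathcal V_\alpha)$, with the covers $\mathcal V_\beta$ carefully engineered from the past $U_\beta$'s, in such a way that the points $x_\alpha$ so extracted form a dense subset of $X$; then $\bigcup U_\alpha \supseteq \{x_\alpha:\alpha<\kappa\}$ is automatically dense. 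The hard step, and main technical obstacle, will be designing the covers $\mathcal V_\beta$ so that $\tau$'s responses $V_\beta$ jointly intersect every nonempty open of $X$, thereby forcing the desired density of $\{x_\alpha\}$.
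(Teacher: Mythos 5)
Three of your four directions are correct and coincide with the paper's argument: the simulation $\sigma \mapsto \tau$ for the forward implication of (\ref{playerone}), the point-indexed cover $\{\sigma'(\mathrm{hist},x): x \in X\}$ for the forward implication of (\ref{playertwo}), and the ``pick any member of the played cover containing $x_\alpha$'' recipe for the converse of (\ref{playertwo}) are all exactly what the paper does. The gap is in the remaining direction, the converse of (\ref{playerone}), and it is a real one. Your plan is to choose the points $x_\alpha$ so that $\{x_\alpha : \alpha<\kappa\}$ is itself dense and then deduce density of $\bigcup_\alpha U_\alpha$ from $x_\alpha \in U_\alpha$. But a strategy of player one forcing the set of played points to be dense exists if and only if $d(X)\le\kappa$, which does \emph{not} follow from player two having a winning strategy in $G^\kappa_1(\mathcal{O},\mathcal{O}_D)$: the paper's Example \ref{exdensity} is (consistently) a space on which player two wins even $G^{\omega_1}_1(\mathcal{O},\mathcal{O})$ while $d(X)>\aleph_1$, so your scheme cannot be carried out in ZFC. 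Moreover the inference you gesture at --- that $\bigcup_\beta V_\beta$ being dense forces $\{x_\beta : \beta<\kappa\}$ to be dense when $x_\beta\in V_\beta$ --- is false: one point chosen from each member of a family with dense union need not form a dense set.

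The missing idea is that the density of $\bigcup_\alpha U_\alpha$ must come from the winning condition of $\tau$ itself, not from the points. Given a partial history of covers $(\mathcal{O}_\beta : \beta<\alpha)$, let $\mathcal{V}$ be the family of all open $V$ that are \emph{not} of the form $\tau((\mathcal{O}_\beta:\beta<\alpha)^\frown(\mathcal{U}))$ for any open cover $\mathcal{U}$; then $\mathcal{V}$ cannot be an open cover (otherwise $\tau$'s response to $(\mathcal{O}_\beta:\beta<\alpha)^\frown(\mathcal{V})$ would be a member of $\mathcal{V}$, contradicting its definition), so there is a point $x_\alpha$ \emph{every} neighbourhood of which is such a response. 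Player one plays $x_\alpha$; whatever $U_\alpha\ni x_\alpha$ player two answers can then be reinterpreted as $\tau$'s answer to some cover $\mathcal{O}_\alpha$, so the entire run of $G^p_o(\kappa)$ is the shadow of a run of $G^\kappa_1(\mathcal{O},\mathcal{O}_D)$ in which player two followed $\tau$, and $\bigcup_\alpha U_\alpha$ is dense because $\tau$ is winning. Without this device (or an equivalent one) your construction for this direction does not go through; the other three directions are fine as written.
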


\begin{proof}

The direct implication of ($\ref{playerone}$) is easy to see. Indeed, let $\tau$ be a winning strategy for player one in $G^p_o(\kappa)$. Given an open cover $\mathcal{U}$ let $\sigma((\mathcal{U}))$ be any open set $O \in \mathcal{U}$ such that $\tau(\emptyset) \in O$. Assuming we have defined $\sigma$ for the first $\alpha$ many innings, and $\{\mathcal{O}_\beta: \beta \leq \alpha \}$ be a sequence of open covers, let $\sigma((\mathcal{O}_\beta: \beta \leq \alpha))$ be any open set $O \in \mathcal{O}_\alpha$ such that $\tau((\sigma(\mathcal{O}_\gamma: \gamma \leq \beta): \beta  < \alpha)) \in O$. Then $\sigma$ is a winning strategy for player two in $G^\kappa_1(\mathcal{O}, \mathcal{O}_D)$. Indeed, let $(\mathcal{O}_0, O_0, \mathcal{O}_1, O_1, \dots \mathcal{O}_\alpha, O_\alpha, \dots)$ be a play, where player two plays according to $\sigma$. Then $\tau((\sigma(\mathcal{O}_\gamma: \gamma \leq \beta): \beta <\alpha)) \in O_\alpha$, and hence $\bigcup \{O_\alpha: \alpha <\kappa \}$ is dense, as $\tau$ is a winning strategy for player one in $G^p_o(\kappa)$.

To prove the converse implication of ($\ref{playerone}$), $\sigma$ be a winning strategy for player two in $G^\kappa_1(\mathcal{O}, \mathcal{O}_D)$ on some space $X$.

\noindent {\bf Claim}. Let $(\mathcal{O}_\alpha: \alpha < \beta)$ be a sequence of open covers. Then there is a point $x \in X$ such that, for every neighbourhood $U$ of $x$ there is an open cover $\mathcal{U}$ with $U=\sigma((\mathcal{O}_\alpha: \alpha < \beta)^\frown (\mathcal{U}))$.

\begin{proof}[Proof of Claim]
Recalling that $\mathcal{O}$ denotes the set of all open covers of $X$, let $\mathcal{V}=\{V$ open: $(\forall \mathcal{U} \in \mathcal{O})(V \neq \sigma((\mathcal{O}_\alpha: \alpha < \beta)^\frown(\mathcal{U})) \}$.  Its definition easily implies that $\mathcal{V}$ cannot be an open cover, and hence there is a point $x \in X \setminus \bigcup \mathcal{V}$. By definition of $\mathcal{V}$ we must have that for every neighbourhood $U$ of $x$ there is an open cover $\mathcal{U}$ such that $U=\sigma((\mathcal{O}_\alpha: \alpha < \beta)^\frown (\mathcal{U}))$ and hence we are done.
\renewcommand{\qedsymbol}{$\triangle$}
\end{proof}

Use the Claim to choose a point $x_0$ such that, for every neighbourhood $U$ of $x_0$ there is an open cover $\mathcal{U}$ with $\sigma(\mathcal{U})=U$ and let $\tau(\emptyset)=x_0$.

Suppose we have defined $\tau$ for the first $\alpha$ many innings. Let now $\{V_\beta: \beta \leq \alpha \}$ be a sequence of open sets and $\{\mathcal{O}_\beta: \beta < \alpha\}$ be a sequence of open covers such that $V_\beta=\sigma((\mathcal{O}_\gamma: \gamma \leq \beta ))$, for every $\beta < \alpha$. Use the claim to choose a point $x_\alpha$ such that, for every open neighbourhood $U$ of $x_\alpha$ there is an open cover $\mathcal{O}$ with $U=\sigma((\mathcal{O}_\beta: \beta < \alpha)^\frown (\mathcal{O}))$ and let $\tau((V_\beta: \beta \leq\alpha))=x_\alpha$.

We now claim that $\tau$ is a winning strategy for player one in $G^p_o(\kappa)$. Indeed, let $(x_0, V_0, x_1, V_1, \dots x_\alpha, V_\alpha, \dots )$ be a play where player one uses strategy $\tau$. Then there must be a sequence of open covers $\{\mathcal{O}_\alpha: \alpha < \kappa \}$ such that $V_\beta=\sigma((\mathcal{O}_\alpha: \alpha < \beta ))$, for every $\beta < \kappa$. Since $\sigma$ is a winning strategy for two in $G^\kappa_1(\mathcal{O}, \mathcal{O}_D)$ then $\bigcup \{V_\alpha: \alpha < \kappa \}$ is dense in $X$ and this proves that $\tau$ is a winning strategy for player one in $G^p_o(\kappa)$.

To prove the direct implication of $(\ref{playertwo})$, let $\tau$ be a winning strategy for player two in $G^p_0(\kappa)$. We define a winning strategy $\sigma$ for player one in $G^\kappa_1(\mathcal{O}, \mathcal{O}_D)$ as follows: in his first move player one plays the open cover $\sigma(\emptyset)= \{\tau(x): x \in X \}$. Assuming we have defined $\sigma$ for every inning $\beta$, with $\beta <\alpha$, let $(U_\beta: \beta < \alpha)$ be a sequence of open sets such that there is a sequence $\{x_\beta: \beta < \alpha \}$ of points with $U_\beta=\tau(x_\gamma: \gamma \leq \beta)$. Then we simply define $\sigma((U_\beta: \beta < \alpha))$ to be $\{\tau((x_\beta: \beta < \alpha)^\frown (x)): x \in X \}$. We claim that $\sigma$ is a winning strategy for player one in $G^\kappa_1(\mathcal{O}, \mathcal{O}_D)$.

Indeed, let $(\mathcal{O}_0, U_0, \dots \mathcal{O}_\alpha, U_\alpha \dots)$ be a play of $G^\kappa_1(\mathcal{O}, \mathcal{O}_D)$ where player one plays according to $\sigma$. Therefore, we can find a sequence $\{x_\alpha: \alpha < \kappa \}$ of points, such that $U_\alpha=\tau((x_\gamma: \gamma < \alpha))$, and hence $\bigcup_{\alpha < \kappa} U_\alpha$ is not dense, since $\tau$ is a winning strategy for player two in $G^p_o(\kappa)$. So $\sigma$ must be a winning strategy for player one in $G^\kappa_1(\mathcal{O}, \mathcal{O}_D)$.

To prove the converse implication of $(\ref{playertwo})$, let $\sigma$ be a winning strategy for player one in $G^\kappa_1(\mathcal{O}, \mathcal{O}_D)$. We will use $\sigma$ to define a winning strategy $\tau$ for player two in $G^p_o(\kappa)$. Given a point $x \in X$, let $\tau((x))$ be any open set $U \in \sigma(\emptyset)$ such that $x \in U$. 

Now suppose $\tau$ has been defined for all sequences of points of ordinal length less than $\alpha$. Given a sequence $\{x_\beta: \beta \leq \alpha \} \subset X$, let $\tau((x_\beta: \beta \leq \alpha))$ be any open set $U \in \sigma((\tau((x_\gamma: \gamma \leq \beta): \beta < \alpha))$ such that $x_\alpha \in U$. We claim that $\tau$ thus defined is a winning strategy for player two in $G^p_o(\kappa)$.

Indeed, let $x_0, U_0, x_1, U_1, \dots x_\alpha, U_\alpha \dots$ be a play of $G^p_o(\kappa)$, where player two plays according to $\tau$. Then $U_\alpha \in \sigma((\tau((x_\gamma: \gamma \leq \beta): \beta < \alpha))$, for every $\alpha < \kappa$ and hence, since $\sigma$ is a winning strategy for player I in $G^\kappa_1(\mathcal{O}, \mathcal{O}_D)$ we must have that $\bigcup_{\alpha < \kappa} U_\alpha$ is not dense, and we are done.

\end{proof}

We are now going to exploit this result to give a short proof to a result of Angelo Bella from \cite{BS}.

\begin{theorem} \label{thmang}
Let $(X, \tau)$ be a regular space. Suppose that player two has a winning strategy in $G^{\kappa}_1(\mathcal{O}, \mathcal{O}_D)$. Then $d(X) \leq \chi(X)^{<\kappa}$
\end{theorem}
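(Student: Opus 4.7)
The plan is to dualize by Theorem~\ref{thmequiv}(\ref{playerone}) and then mimic a standard ``strategic closure'' argument in the point-picking game. Assuming player two wins $G^\kappa_1(\mathcal{O},\mathcal{O}_D)$, we obtain a winning strategy $\tau$ for player one in the open-picking game $G^p_o(\kappa)$, which plays a point at each inning and wants player two's chosen neighborhoods to have dense union. Set $\chi=\chi(X)$ and, for each $x\in X$, fix a local base $\mathcal{B}_x$ with $|\mathcal{B}_x|\le\chi$.

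The candidate dense set $D$ will be obtained by recursively collecting all points that $\tau$ can play when player two restricts responses to the local bases. More precisely, for each ordinal $\beta<\kappa$ and each sequence $s=(U_\alpha)_{\alpha<\beta}$ of open sets such that, writing $x_\alpha=\tau((U_\gamma)_{\gamma<\alpha})$, one has $x_\alpha\in U_\alpha\in\mathcal{B}_{x_\alpha}$ for every $\alpha<\beta$, let $y(s)=\tau(s)\in X$, and put $D=\{y(s):s\text{ is such a legal partial play}\}$. Counting plays by induction on $\beta$, at stage $\beta$ one has at most $\chi^{|\beta|}$ admissible histories, so $|D|\le\sum_{\beta<\kappa}\chi^{|\beta|}=\chi^{<\kappa}$.

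The crux is showing $D$ is dense, and this is where regularity enters. Suppose not; pick $x\in X\setminus\overline{D}$ and, by regularity applied to the closed set $\overline{D}$ and the point $x$, produce disjoint open sets $V\ni x$ and $W\supseteq\overline{D}$. Then build a play $(x_\beta,U_\beta)_{\beta<\kappa}$ by letting player two, at each inning, pick $U_\beta\in\mathcal{B}_{x_\beta}$ with $U_\beta\subseteq W$; this is possible because $x_\beta\in D\subseteq W$ (by induction $x_\beta$ is a point of $D$, since all previous responses came from local bases), and $W$ is an open neighborhood of $x_\beta$. This is then a legal play, but $\bigcup_\beta U_\beta\subseteq W$, so it misses $V$ and fails to be dense, contradicting that $\tau$ wins for player one.

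The main obstacle I expect is the density step: one must verify that the inductively constructed play really is among those considered in the definition of $D$ (so that each $x_\beta$ actually lies in $D\subseteq W$, allowing the next shrinking move), and that regularity is invoked correctly to separate $x\in X\setminus\overline{D}$ from the closed set $\overline{D}$ rather than only from $D$ itself. The counting of $|D|\le\chi^{<\kappa}$ and the duality with $G^p_o(\kappa)$ are routine in comparison.
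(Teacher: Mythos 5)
Your argument is correct, and it reaches the bound by a genuinely different (more hands-on) route than the paper. Both proofs start identically, dualizing via Theorem~\ref{thmequiv}(\ref{playerone}) to get a winning strategy for player one in the open-picking game $G^p_o(\kappa)$, and both then force player two to answer with members of fixed local bases of size at most $\chi(X)$ so that the strategy's reachable points must be dense. The paper packages the closure step with a $<\kappa$-closed elementary submodel $M \prec H(\theta)$ of cardinality $\chi(X)^{<\kappa}$ containing $(X,\tau)$ and the strategy, and shows $X\cap M$ is dense: elementarity plus $<\kappa$-closedness guarantee automatically that every point the strategy produces (against responses chosen in $M$) stays in $M$, and regularity is used to shrink a witness of non-density to an open $V$ with $\overline{V}\cap X\cap M=\emptyset$ so that each responding basic neighbourhood can be taken disjoint from $V$. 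You instead build the candidate dense set $D$ by explicit transfinite recursion over the tree of base-restricted histories, count the histories directly to get $|D|\le\sum_{\beta<\kappa}\chi(X)^{|\beta|}=\chi(X)^{<\kappa}$, and use regularity to separate a hypothetical point of $X\setminus\overline{D}$ from $\overline{D}$ by disjoint open $V$ and $W\supseteq\overline{D}$, then steer player two into $W$; your verification that the constructed run's points really lie in $D$ (so the recursion can continue) is exactly the bookkeeping that the submodel does for free. What your approach buys is self-containedness (no $H(\theta)$, no elementary submodels) and a completely transparent cardinality count; what the submodel approach buys is brevity and a reusable template (the same $M$ argument is used again later in the paper for the $\pi$-weight bound). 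The two uses of regularity are equivalent maneuvers, and your counting needs only the standard convention $\chi(X)\ge\omega$, so there is no gap.
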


\begin{proof}
Using Theorem $\ref{thmequiv}$ fix a winning strategy $\sigma$ for player one in $G^p_o(\kappa)$. Let $M$ be a $<\kappa$-closed elementary submodel of $H(\theta)$, for large enough regular $\theta$ such that $|M|=\chi(X)^{<\kappa}$, $(X,\tau), \sigma \in M$ and $\chi(X) +1 \subset \kappa$.

We claim that $X \cap M$ is dense in $X$. Suppose not, and let $V \subset X$ be an open set such that $\overline{V} \cap X \cap M=\emptyset$. For every $x \in X \cap M$, let $\mathcal{U}_x \in M$ be a local base of size $\leq \chi(X)$. Since $\chi(X)+1 \subset M$ we have $\mathcal{U}_x \subset M$, and hence we can find in $M$ a neighbourhood  $U_x$ of $x$ such that $U_x \cap V=\emptyset$.

Since we have both $X \in M$ and $\sigma \in M$, the first move of player one $\sigma((\emptyset)):=x_0$ is a point of $M$. Let player two pick the open set $U_{x_0}$

Suppose that for some countable ordinal $\alpha$, player two picked open set $U_{x_\beta}$ at inning $\beta$ for every $\beta < \alpha$. Note that $\{U_{x_\beta}: \beta < \alpha \} \subset M$ and since $M$ is $<\kappa$ closed we have $\{U_\beta: \beta < \alpha \} \in M$. Therefore $x_\alpha:=\sigma((X)^\frown (U_\beta: \beta < \alpha)) \in M$. Let player two play $U_{x_\alpha}$ at the $\alpha$th inning. Since $\sigma$ is a winning strategy for player one, we must have that $\bigcup \{U_{x_\alpha}: \alpha < \omega_1 \}$ is a dense set. But this is impossible, because $V \cap U_{x_\alpha}=\emptyset$ for every $\alpha < \kappa$.
\end{proof}

\begin{corollary}
(A. Bella) Let $X$ be a first-countable regular space. If player two has a winning strategy in $G^{\aleph_1}_1(\mathcal{O}, \mathcal{O}_D)$ then $|X| \leq 2^{\aleph_0}$.
\end{corollary}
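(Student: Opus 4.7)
The plan is to derive this immediately from Theorem \ref{thmang} combined with the classical Hausdorff cardinal inequality $|X| \leq d(X)^{\chi(X)}$ for Hausdorff spaces (in particular, regular spaces). First I would set $\kappa = \aleph_1$ in Theorem \ref{thmang}. Since $X$ is first-countable, $\chi(X) = \aleph_0$, so the conclusion yields $d(X) \leq \chi(X)^{<\kappa} = \aleph_0^{<\aleph_1}$.

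Next I would evaluate the cardinal arithmetic: $\aleph_0^{<\aleph_1} = \sup\{\aleph_0^{\alpha} : \alpha < \aleph_1\} = \aleph_0^{\aleph_0} = 2^{\aleph_0}$. Hence $d(X) \leq 2^{\aleph_0}$.

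Finally, I would invoke the standard bound $|X| \leq d(X)^{\chi(X)}$ (valid for any Hausdorff space, proved by sending each point $x \in X$ to the function assigning to each element of a fixed local base at $x$ a point of a fixed dense set meeting it, and using Hausdorffness to conclude this assignment is injective). Plugging in the two estimates gives
\[
|X| \leq d(X)^{\chi(X)} \leq (2^{\aleph_0})^{\aleph_0} = 2^{\aleph_0},
\]
as required. There is essentially no obstacle here: the content is entirely packaged inside Theorem \ref{thmang}, and the corollary is just the calibration of that result at $\kappa = \aleph_1$ under first-countability, followed by the Pospíšil--Hausdorff inequality.
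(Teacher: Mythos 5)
Your proposal is correct and is essentially the paper's own argument: the paper also applies Theorem \ref{thmang} with $\kappa=\aleph_1$ to get $d(X)\leq \aleph_0^{<\aleph_1}=2^{\aleph_0}$ and then notes that a first-countable (Hausdorff) space with a dense set of size continuum has cardinality at most $(2^{\aleph_0})^{\aleph_0}=2^{\aleph_0}$, which is exactly your use of $|X|\leq d(X)^{\chi(X)}$.
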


\begin{proof}
Every first countable space with a dense set of cardinality continuum has cardinality at most $(2^{\aleph_0})^{\aleph_0}=2^{\aleph_0}$.
\end{proof}

\begin{corollary}
Every first countable regular space where player II has a winning strategy in $G^\omega_1(\mathcal{O}, \mathcal{O}_D)$ is separable.
\end{corollary}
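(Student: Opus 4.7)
The plan is to apply Theorem \ref{thmang} directly with $\kappa=\omega$. That theorem gives $d(X)\le \chi(X)^{<\kappa}$ whenever $X$ is regular and player two wins $G^{\kappa}_1(\mathcal{O},\mathcal{O}_D)$. Substituting first countability ($\chi(X)=\aleph_0$) and $\kappa=\omega$, we obtain
\[
d(X)\le \aleph_0^{<\omega}=\sup_{n<\omega}\aleph_0^n=\aleph_0,
\]
which is exactly separability.

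There is essentially nothing to do beyond invoking Theorem \ref{thmang}, and so no real obstacle. For the reader who prefers to see the specialization directly, one may re-run the elementary submodel argument from the proof of Theorem \ref{thmang} with $\kappa=\omega$: take any countable $M\prec H(\theta)$ containing $(X,\tau)$ and a winning strategy for player one in $G^p_o(\omega)$ (furnished by Theorem \ref{thmequiv}). Because $\kappa=\omega$, the $<\kappa$-closure condition on $M$ is automatic; first countability guarantees that a local base at each $x\in X\cap M$ of size $\le \aleph_0$ is already an element of $M$; and the argument goes through verbatim, producing a countable dense subset $X\cap M$ of $X$ from any play against the winning strategy in which player two's moves are chosen inside $M$.
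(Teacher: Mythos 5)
Your proposal is correct and is exactly the paper's (implicit) argument: the corollary is just the $\kappa=\omega$ instance of Theorem~\ref{thmang}, since first countability gives $d(X)\le\chi(X)^{<\omega}=\aleph_0^{<\omega}=\aleph_0$. Your optional re-run of the elementary submodel argument with a countable $M$ is also faithful to the paper's proof of that theorem, so there is nothing further to add.
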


\begin{theorem}
Let $X$ be a first countable regular space. If player two has a winning strategy in $G^{\aleph_0}_1(\mathcal{O}, \mathcal{O}_D)$ then $X$ is separable.
\end{theorem}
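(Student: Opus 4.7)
The plan is to invoke Theorem \ref{thmequiv} and pass to a winning strategy $\sigma$ for player one in the dual open-picking game $G^p_o(\omega)$, then to construct a countable dense set by recursively closing off the $\sigma$-responses to all plays in which player two restricts to basic open neighbourhoods. First countability will keep this tree of plays countable, and regularity will let us build a play that defeats $\sigma$ whenever the resulting set fails to be dense. Notice that Theorem \ref{thmang} cannot be applied directly in this case, since its proof requires $\chi(X)+1\subset\kappa$, which fails for $\chi(X)=\kappa=\omega$.

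Concretely, for each $x\in X$ fix a countable local base $\{B^x_k : k<\omega\}$. Recursively define a tree $T$ of finite legal histories $(y_0, U_0, y_1, U_1, \dots, y_n)$ in which $y_0=\sigma(\emptyset)$, each $U_i$ lies in $\{B^{y_i}_k : k<\omega\}$, and each $y_{i+1}=\sigma((y_0, U_0,\dots, y_i, U_i))$. An easy induction on the length, using that player two has only countably many choices at each stage and that $\sigma$ determines player one's response, shows each level of $T$ is countable. Hence the set $D$ of all points of $X$ that appear as some $y_i$ in a history in $T$ is countable.

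To see that $D$ is dense, suppose not. Since $X$ is regular we may pick a non-empty open $V$ with $\overline{V}\cap\overline{D}=\emptyset$. For each $y\in D$ we have $y\notin\overline{V}$, so by first countability there exists $n_y<\omega$ with $B^y_{n_y}\cap V=\emptyset$. Consider the play of $G^p_o(\omega)$ in which player one follows $\sigma$ and player two always responds to the most recent point $y$ with $B^y_{n_y}$. By construction this is a history in $T$, so every point played by player one lies in $D$ and the response $B^y_{n_y}$ is well defined; but the union of player two's moves is then disjoint from $V$, hence not dense, contradicting the fact that $\sigma$ was winning for player one.

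The main obstacle is ensuring that the auxiliary tree $T$ is simultaneously countable and rich enough to contain the contradicting play. First countability resolves both demands: it bounds the menu of player two at each stage by $\omega$, keeping $T$ countable, while at the same time guaranteeing that a single basic neighbourhood at every $y\in D$ suffices to avoid $V$. Regularity is used exactly once, to thicken the witness $V$ to one whose closure misses $\overline{D}$, without which the basic neighbourhoods at points of $D$ might be forced to meet $V$.
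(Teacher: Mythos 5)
Your argument is correct: dualizing via Theorem \ref{thmequiv} to obtain a winning strategy $\sigma$ for player one in $G^p_o(\omega)$, closing off under $\sigma$ along the tree of plays in which player two answers with members of fixed countable local bases, and then using regularity to produce a play defeating $\sigma$ whenever the resulting countable set $D$ fails to be dense -- each step checks out, and in particular the contradicting play does stay inside your tree $T$, so player one's moves all land in $D$ and player two's answers are well defined. The paper, however, obtains this statement as an instance of Theorem \ref{thmang}: the same duality plus a $<\kappa$-closed elementary submodel $M$ with $(X,\tau),\sigma\in M$, where the submodel performs exactly the closing-off your tree does by hand, and density of $X\cap M$ is proved by the same device of letting player two always answer with a neighbourhood (chosen in $M$) missing the putative witness $V$. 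So the two proofs share the same core idea; yours trades the elementary-submodel machinery for an explicit countable recursion, which makes the $\kappa=\omega$ case self-contained and elementary, while the submodel version yields the general bound $d(X)\le\chi(X)^{<\kappa}$ in one stroke. One correction to your preliminary remark: your reason for not invoking Theorem \ref{thmang} directly rests on a typo in that proof -- the condition should read $\chi(X)+1\subset M$ (which is what is actually used there), not $\chi(X)+1\subset\kappa$, and for $\kappa=\chi(X)=\omega$ this holds automatically for any elementary submodel; so Theorem \ref{thmang} with $\kappa=\omega$ does apply as stated and gives $d(X)\le\chi(X)^{<\omega}=\aleph_0$, i.e.\ separability, directly.
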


The above theorem would lead one to conjecture that if player two has a winning strategy in $G^{\aleph_1}_1(\mathcal{O}, \mathcal{O}_D)$ on a first countable regular space $X$, then $X$ should have density $\aleph_1$, but in Example $\ref{exdensity}$ we are going to show that this is not the case, not even if  $G^{\omega_1}_1(\mathcal{O}, \mathcal{O}_D)$ is replaced with the stronger (for player II) game $G^{\omega_1}_1(\mathcal{O}, \mathcal{O})$.

\begin{lemma} \label{lemmaex}
Assume $cov(\mathcal{M})>\aleph_1+ \mathfrak{c}=\aleph_2$. Then there is a set $Y \subset \mathbb{I}=[0,1]$ of cardinality $\aleph_2$ such that the intersection of $Y$ with every meager set of $\mathbb{I}$ has cardinality at most $\aleph_1$.
\end{lemma}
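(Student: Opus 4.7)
The plan is a standard transfinite-recursion construction of length $\omega_2$ that enumerates all $F_\sigma$ meager subsets of $\mathbb{I}$ and, at each stage, picks a point avoiding every meager set listed so far together with the previously chosen points. The key is to use $\mathfrak{c} = \aleph_2$ to make sure every meager set appears in the enumeration early enough, while $cov(\mathcal{M}) > \aleph_1$ lets us always find a fresh point.

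First I would note that, since $\mathbb{I}$ has a countable base and $\mathfrak{c}^{\aleph_0}= \mathfrak{c} = \aleph_2$, there are exactly $\aleph_2$ many meager $F_\sigma$ subsets of $\mathbb{I}$. Fix an enumeration $\{M_\alpha : \alpha < \omega_2\}$ of them, and recall the standard fact that every meager subset of $\mathbb{I}$ is contained in some meager $F_\sigma$ set, so it is enough to block these.

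Next, construct $y_\alpha$ by transfinite recursion on $\alpha < \omega_2$: set $A_\alpha = \bigcup_{\beta < \alpha} M_\beta \cup \{y_\beta : \beta < \alpha\}$ and pick $y_\alpha \in \mathbb{I} \setminus A_\alpha$. This is possible because $A_\alpha$ is a union of at most $\aleph_1$ meager sets (each singleton being meager, and $|\alpha|\leq\aleph_1$), and under the hypothesis $cov(\mathcal{M}) > \aleph_1$ such a union cannot cover $\mathbb{I}$. Let $Y = \{y_\alpha : \alpha < \omega_2\}$; injectivity of the enumeration forces $|Y| = \aleph_2$.

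To verify the intersection property, let $M \subseteq \mathbb{I}$ be meager and choose $\gamma < \omega_2$ with $M \subseteq M_\gamma$. For every $\alpha > \gamma$, the recursion forces $y_\alpha \notin M_\gamma \supseteq M$, so $Y \cap M \subseteq \{y_\alpha : \alpha \leq \gamma\}$, which has cardinality at most $\aleph_1$. There is no genuine obstacle in the argument; the only thing to keep an eye on is matching the length of the recursion ($\omega_2$) to the number of meager $F_\sigma$ codes ($\mathfrak{c} = \aleph_2$), so that every meager set is already blocked by some bounded stage and only an initial segment of $Y$ can lie in it.
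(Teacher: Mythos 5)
Your proof is correct and is essentially the same construction as the paper's: a transfinite recursion of length $\omega_2$ choosing $y_\alpha$ outside the union of the previously listed small sets and previously chosen points, using $cov(\mathcal{M})>\aleph_1$ to find a fresh point and $\mathfrak{c}=\aleph_2$ to bound the enumeration. The only cosmetic difference is that you enumerate the meager $F_\sigma$ sets while the paper enumerates the closed nowhere dense sets (its verification then implicitly uses that a meager set is covered by countably many listed sets whose indices are bounded below $\omega_2$), so your final verification is marginally more direct but the argument is the same.
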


\begin{proof}
$MA_{\omega_1}$ implies that $\mathbb{I}$ is not the union of $\aleph_1$ many nowhere dense sets.
Note that $\mathbb{I}$ has continuum many closed sets, so we can use $\mathfrak{c}=\omega_2$ to fix an enumeration $\{F_\alpha: \alpha < \omega_2 \}$ of the closed nowhere dense subsets of $\mathbb{I}$. We are going to construct $Y$ by transfinite induction. Suppose we have chosen points $\{y_\alpha: \alpha < \beta \} \subset \mathbb{I}$, where $\beta < \omega_2$. Choose any point $y_\beta \in \mathbb{I} \setminus (\bigcup_{\alpha < \beta} F_\alpha \cup \{y_\alpha: \alpha < \beta \})$. Then $Y=\{y_\alpha: \alpha < \omega_2\}$ is the desired set.
\end{proof}

\begin{example} \label{exdensity}
($cov(\mathcal{M})>\aleph_1+\mathfrak{c}=\omega_2$) A first countable regular space $X$ such that player two has a winning strategy in $G^{\omega_1}_1(\mathcal{O}, \mathcal{O})$ and $d(X)>\aleph_1$.
\end{example}

\begin{proof}
Recall the construction of the Alexandroff Double $\mathbb{D}$ of the unit interval. We define a topology on $\mathbb{D}=\mathbb{I} \times \{0,1\}$ by declaring every point of $\mathbb{I} \times \{1\}$ to be isolated and declaring a neighbourhood of a point $(x,0) \in \mathbb{I} \times \{0\}$ to be of the form $U \times \{0\} \cup U \times \{1\} \setminus F$, where $U$ is an Euclidean open set and $F$ is a finite set. It is easy to see that $\mathbb{D}$ is a compact Hausdorff space with points $G_\delta$, and hence it's first countable. 

Let now $Y$ be the set constructed in Lemma $\ref{lemmaex}$. Without loss of generality we can assume that $Y$ is dense in $\mathbb{I}$. Now, consider the space $X= Y \times \{0,1\}$ with the topology induced by $\mathbb{D}$. Then $X$ is a regular first countable space of density $\aleph_2$. Fix a countable dense set $D \subset Y$. We let $C=D \times \{0\}$.

\noindent {\bf Claim.} The complement of every open set containing $C$ has cardinality at most $\aleph_1$.

\begin{proof}[Proof of Claim] Let $U$ be an open subset of $X$ such that $C \subset U$. Since $Y$ is hereditarily Lindelof, we can assume that $U=\bigcup_{n<\omega} (U_n \times \{0\} \cup U_n \times \{1\} \setminus F_n)$, where $U_n$ is the trace on $Y$ of an Euclidean open set and $F_n$ is finite. Let $O=\bigcup_{n<\omega} U_n$ and let $V$ be an open subset of $\mathbb{I}$ such that $V \cap Y=O$. Since $V$ is dense in $\mathbb{I}$,  $\mathbb{I} \setminus V$ is nowhere dense, and hence $A=(\mathbb{I} \setminus V) \cap Y$ has cardinality at most $\aleph_1$. Now $X \setminus U \subset (A \times \{0,1\}) \cup \bigcup_{n<\omega} F_n$ and since the latter set has cardinality at most $\aleph_1$, also $X \setminus U$ has cardinality at most $\aleph_1$, as we wanted.
\renewcommand{\qedsymbol}{$\triangle$}
\end{proof}

A winning strategy for player two in $G^{\omega_1}_1(\mathcal{O}, \mathcal{O})$ is now easy to describe. Let $\{x_n: n < \omega \}$ be an enumeration of $C$, and suppose that $\mathcal{U}_\alpha$ is the open cover played by player one at inning $\alpha$. At inning $n<\omega$ player two picks an open set $U_n \in \mathcal{U}_n$ such that $x_n \in U_n$. Let $\{z_\alpha: \alpha < \omega_1\}$ be an enumeration of $X \setminus \bigcup_{n<\omega} U_n$. Then at inning $\omega+\alpha$ player two simply picks an open set $U_{\omega+\alpha} \in \mathcal{U}_{\omega+\alpha}$ such that $z_\alpha \in U_{\omega+\alpha}$. We then have that $\{U_\alpha: \alpha < \omega_1 \}$ is an open cover, regardless of player one's choices, and hence the strategy we have defined is a winning one.
\end{proof}

\begin{question}
Can we get a space with the features of Example $\ref{exdensity}$ simply from the negation of CH?
\end{question}

Here is another application of Theorem $\ref{thmequiv}$, regarding the behavior of the game version of weak Lindel\"ofness in products.

\begin{theorem}
Suppose player two has a winning strategy in $G^\omega_1(\mathcal{O}, \mathcal{D})$ on $X$ and $Y$ is separable. Then player two has a winning strategy in $G^\omega_1(\mathcal{O}, \mathcal{D})$ on $X \times Y$.
\end{theorem}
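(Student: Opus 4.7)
The plan is to invoke the duality from Theorem~\ref{thmequiv}\,(\ref{playerone}) and work with the open-picking game instead. So it suffices to show: if player one has a winning strategy $\tau$ in $G^p_o(\omega)$ on $X$ and $Y$ is separable, then player one has a winning strategy $\tau'$ in $G^p_o(\omega)$ on $X\times Y$. This reformulation is convenient because now player one is producing points, and we can attach a countable dense subset of $Y$ to the $Y$-coordinate while letting $\tau$ govern the $X$-coordinate.

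Fix a countable dense set $D=\{d_n:n<\omega\}$ in $Y$ and a partition $\omega=\bigsqcup_{n<\omega} I_n$ into infinite pieces, enumerating $I_n=\{i^n_0<i^n_1<\cdots\}$. The strategy $\tau'$ will run countably many copies of the $X$-game in parallel, one per block. At inning $\alpha=i^n_k$, given the responses $W_\beta$ played by player two for $\beta<\alpha$, define the \emph{slices}
\[
U^n_j=\{x\in X:(x,d_n)\in W_{i^n_j}\}\qquad (j<k),
\]
which are open in $X$, and let $\tau'$ output the point $(x^n_k,d_n)$, where $x^n_k=\tau((U^n_j:j<k))$.

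To verify that $\tau'$ wins, fix a full play $(W_\alpha:\alpha<\omega)$ against $\tau'$ and a nonempty basic open rectangle $U\times V$ in $X\times Y$. Pick $n$ with $d_n\in V$. Along the block $I_n$ the pairs $(x^n_k,U^n_k)_{k<\omega}$ form a legitimate play of $G^p_o(\omega)$ on $X$ in which player one uses $\tau$, since each $U^n_k$ is an open neighbourhood of $x^n_k$ in $X$. As $\tau$ is winning, $\bigcup_k U^n_k$ is dense in $X$, so it meets $U$; any $x\in U\cap U^n_k$ witnesses $(x,d_n)\in W_{i^n_k}\cap(U\times V)$, which gives density of $\bigcup_\alpha W_\alpha$ in $X\times Y$.

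The only delicate point is choosing the right ``projection'' of $W_{i^n_j}$ to $X$. The slice $U^n_j$ is the right choice because it is automatically open, contains $x^n_j$, and, crucially, every one of its points lifts back into $W_{i^n_j}$ via the fixed height $d_n$; this is exactly what lets a hit on the $X$-side translate to a hit inside the rectangle $U\times V$. Everything else is standard bookkeeping with the partition.
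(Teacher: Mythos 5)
Your proof is correct and follows essentially the same route as the paper: dualize via Theorem~\ref{thmequiv}, fix a countable dense set in $Y$, partition $\omega$ into infinite blocks, and run a parallel copy of the open-picking game on $X$ at each fixed height $d_n$. The only difference is cosmetic but welcome: you handle arbitrary open responses of player two by taking the open slice $\{x:(x,d_n)\in W\}$, whereas the paper tacitly assumes player two replies with basic rectangles $U_i\times V_i$.
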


\begin{proof}
Exploiting Theorem $\ref{thmequiv}$ fix a winning strategy $\sigma_X$ for player one on $X$ in $G^p_o(\omega)$ and fix a countable dense set $\{d_n: n < \omega \}$ for $Y$. Partition $\omega$ into infinitely many sets $\{I_k: k < \omega \}$ in such a way that if $\{i^k_j: j < \omega \}$ is an increasing enumeration of $I_k$ then $\{i^k_j: k < \omega \}$ is an increasing sequence, for every $j<\omega$. We now define a winning strategy for player one in $G^p_o(\omega)$ .

Assume player two played open set $U_i \times V_i$ at inning $i$, for $i \leq n$, and let $k, j< \omega$ be such that $n=i^k_j$. Let $\sigma_{X \times Y}((U_1 \times V_1, U_2 \times V_2, \dots U_n \times V_n))$ be $(x_n,y_n)$, where $x_n=\sigma_X((U_{i^k_m}: m \leq j ))$ and $y_n=d_k$.

\noindent Let now $$(\sigma_{X \times Y}(\emptyset), U_1 \times V_1, \pi_{X \times Y}(U_1\times V_1), \dots)$$ be a play. We claim that $\bigcup \{U_n \times V_n: n < \omega \}$ is dense. Indeed, let $U \times V$ be a non-empty basic open set in $X \times Y$. Then there is $k<\omega$ such that $d_k \in V$ and since $\pi_X$ is a winning strategy for $G^p_o(\omega)$ on $X$ then there is $n \in I_k$ such that $U \cap U_n \neq \emptyset$. Since $d_k \in V_n$ we have $(U \times V) \cap (U_n \times V_n) \neq \emptyset$ and hence we are done.
\end{proof}

Let's recall the definition of the point open game $G^o_p(\kappa)$, due to Berner and Juh\'asz \cite{BJ}.

\begin{definition}
Two players play $\kappa$ many innings. At inning $\alpha$, player one chooses an open set $O_\alpha$ and player two plays a point $x_\alpha \in O_\alpha$. Player one wins in $G^o_p(\kappa)$ if $\{x_\alpha: \alpha < \kappa \}$ is dense in $X$.
\end{definition}

Scheepers proved in \cite{SchSep} that the game $G^o_p(\kappa)$ is equivalent to the generalized $R$-separability game $G^\kappa_1(\mathcal{D}, \mathcal{D})$, in the following sense.

\begin{theorem} \label{thmequiv2} {\ \\}
\begin{enumerate}
\item \label{Splayerone} Player one has a winning strategy in $G^o_p(\kappa)$ if and only if player two has a winning strategy in $G^\kappa_1(\mathcal{D}, \mathcal{D})$.
\item \label{Splayertwo} Player two has a winning strategy in $G^o_p(\kappa)$ if and only if player one has a winning strategy in $G^\kappa_1(\mathcal{D}, \mathcal{D})$.
\end{enumerate}
\end{theorem}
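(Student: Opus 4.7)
The plan is to mimic the proof of Theorem \ref{thmequiv}, which established the analogous duality between the open-picking game $G^p_o(\kappa)$ and the weak-Rothberger game $G^\kappa_1(\mathcal O,\mathcal O_D)$. As there, two of the four implications are routine translations driven by the fact that dense sets meet every non-empty open set, while the other two use a shadow-play construction together with an auxiliary ``response-coverage'' claim.

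For the direct half of (\ref{Splayerone}) and the converse of (\ref{Splayertwo}) I would run the opponent's strategy on the side: given a winning strategy $\sigma_1$ for player one in $G^o_p(\kappa)$ (respectively, for player one in $G^\kappa_1(\mathcal D,\mathcal D)$), the responding player in the other game computes the shadow open set $O_\alpha$ (resp.\ shadow dense set $D_\alpha$) dictated by $\sigma_1$ and picks any point in $O_\alpha\cap D_\alpha$, which is non-empty because $D_\alpha$ is dense and $O_\alpha$ is non-empty open. The shadow play is a legal run of the original game in which $\sigma_1$ was used, so the winning condition transfers.

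For the direct half of (\ref{Splayertwo}), let $\tau_2$ be a winning strategy for player two in $G^o_p(\kappa)$. I would inductively maintain a shadow $G^o_p$-history $(O_\beta)_{\beta<\alpha}$ parallel to the $G^\kappa_1$-history so that $x_\beta=\tau_2((O_\gamma)_{\gamma\le\beta})$ for every $\beta<\alpha$. At stage $\alpha$ player one in $G^\kappa_1(\mathcal D,\mathcal D)$ plays
\[ D_\alpha=\bigl\{\tau_2((O_\beta)_{\beta<\alpha}^\frown V):V\subseteq X\text{ non-empty open}\bigr\}, \]
which is dense since $\tau_2(\cdots^\frown V)\in V$ for every non-empty open $V$. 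When player two picks $x_\alpha\in D_\alpha$, any witnessing $V$ becomes $O_\alpha$, so the final shadow is a legal $G^o_p$-play won by $\tau_2$; hence $\{x_\alpha:\alpha<\kappa\}$ fails to be dense, which is exactly the win condition for player one in $G^\kappa_1(\mathcal D,\mathcal D)$.

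The main obstacle is the converse of (\ref{Splayerone}), where the argument parallels the Claim in the proof of Theorem \ref{thmequiv}. Given a winning strategy $\sigma_2$ for player two in $G^\kappa_1(\mathcal D,\mathcal D)$ and a history $(D_\beta)_{\beta<\alpha}$, set
\[ A_\alpha=\bigl\{\sigma_2((D_\beta)_{\beta<\alpha}^\frown D):D\in\mathcal D\bigr\}. \]
If $A_\alpha$ had empty interior then $X\setminus A_\alpha$ would be dense, so player one could play it in $G^\kappa_1$, and $\sigma_2$'s response would be a point of $X\setminus A_\alpha$ that nonetheless lies in $A_\alpha$ by definition---a contradiction. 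Thus player one in $G^o_p(\kappa)$ plays any non-empty open $O_\alpha\subseteq A_\alpha$; whatever $x_\alpha$ player two picks from $O_\alpha$ admits a witnessing $D_\alpha\in\mathcal D$ with $\sigma_2((D_\beta)_{\beta<\alpha}^\frown D_\alpha)=x_\alpha$, which extends the shadow $G^\kappa_1$-play. Since $\sigma_2$ is winning, the resulting $\{x_\alpha:\alpha<\kappa\}$ is dense, which is exactly what player one needs to win $G^o_p(\kappa)$.
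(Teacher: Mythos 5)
Your proof is correct: all four implications go through, and the two nontrivial constructions (the dense set $D_\alpha$ of all possible $\tau_2$-responses, and the claim that the response set $A_\alpha$ of $\sigma_2$ must have non-empty interior, proved by playing its complement) are exactly the right dualizations of the arguments in Theorem \ref{thmequiv}. Note that the paper itself offers no proof of this statement—it is quoted from Scheepers, with only the remark that it parallels Theorem \ref{thmequiv}—so your write-up, which faithfully transposes that proof by replacing open covers with dense sets and ``dense union'' with ``dense set of chosen points,'' is precisely the intended argument.
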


He actually stated the result only for the case $\kappa=\omega$ 

We now exploit Scheeper's result to prove that every regular space where player two has a winning strategy in $G^{\omega_1}_1(\mathcal{D}, \mathcal{D})$ has $\pi$-weight at most continuum. As a byproduct we obtain an alternative proof of Scheeper's result countable $\pi$-weight is equivalent to the property that player two has a winning strategy in the $R$-separability game (see \cite{SchSep}).

\begin{theorem}
Let $X$ be a regular space and suppose that player two has a winning strategy in $G^{\kappa}_1(\mathcal{D}, \mathcal{D})$. Then $\pi w(X) \leq 2^{<\kappa}$.
\end{theorem}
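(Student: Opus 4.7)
The plan is to use Theorem \ref{thmequiv2} to rephrase the hypothesis as: player one has a winning strategy $\tau$ in the point-open game $G^o_p(\kappa)$. I will then take a $<\kappa$-closed elementary submodel $M$ of $H(\theta)$, for a suitably large regular $\theta$, with $|M|=2^{<\kappa}$, $\{X,\tau\}\subseteq M$, and $2^{<\kappa}\subseteq M$. The candidate $\pi$-base is the family $\mathcal{P}$ of all non-empty open subsets of $X$ lying in $M$, which has cardinality at most $2^{<\kappa}$; the whole proof reduces to showing that $\mathcal{P}$ is a $\pi$-base for $X$.

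The first step is to establish that $M\cap X$ is dense in $X$. Otherwise pick a non-empty open $V_0$ with $V_0\cap M=\emptyset$, and simulate $G^o_p(\kappa)$ with player one following $\tau$ and player two choosing, at each inning $\alpha$, a point $x_\alpha\in\tau((x_\beta)_{\beta<\alpha})\cap M$. By $<\kappa$-closure the partial play $(x_\beta)_{\beta<\alpha}$ lies in $M$, so $\tau((x_\beta)_{\beta<\alpha})\in M$, and by elementarity this non-empty open set meets $M$, making the choice legal. Since $\tau$ is winning, $\{x_\alpha:\alpha<\kappa\}$ must be dense in $X$; but this set is contained in $M\cap X$ and hence misses $V_0$, a contradiction.

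For the main step, suppose $\mathcal{P}$ is not a $\pi$-base, witnessed by a non-empty open $V$ such that no $U\in\mathcal{P}$ satisfies $U\subseteq V$, and use regularity to fix a non-empty open $W$ with $\overline{W}\subseteq V$. I would then run a second simulation of $G^o_p(\kappa)$, in which at inning $\alpha$ player two picks $x_\alpha\in M\cap(O_\alpha\setminus\overline{W})$, where $O_\alpha=\tau((x_\beta)_{\beta<\alpha})$. Again $O_\alpha\in M$ by the $<\kappa$-closure argument; since no $M$-open is contained in $V$ we have $O_\alpha\not\subseteq\overline{W}$, so $O_\alpha\setminus\overline{W}$ is a non-empty open subset of $X$, and by the density of $M\cap X$ established in the previous step it contains a point of $M$. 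The resulting legal play has $\{x_\alpha:\alpha<\kappa\}$ disjoint from $\overline{W}$, and in particular disjoint from the non-empty open set $W$, so it fails to be dense in $X$, contradicting that $\tau$ is a winning strategy for player one.

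The main obstacle is the requirement, at each stage of the second simulation, to select a point that simultaneously lies in $M$ and avoids $\overline{W}$: this is precisely what makes the preliminary density argument for $M\cap X$ indispensable. Without it, even though $O_\alpha\setminus\overline{W}$ is non-empty open, there is no a priori reason to find a point of $M$ inside it, and the entire game-simulation strategy breaks down.
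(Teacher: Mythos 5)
Your proposal is correct and follows essentially the same route as the paper's proof: dualize via Theorem \ref{thmequiv2} to a winning strategy for player one in $G^o_p(\kappa)$, take a $<\kappa$-closed elementary submodel of size $2^{<\kappa}$, prove $X\cap M$ dense by one game simulation, and then derive a contradiction from a second simulation in which player two's points avoid a regularly-shrunken copy of the offending open set. Your explicit remark that the density of $X\cap M$ is what legitimizes the point choices in the second simulation is a point the paper leaves implicit, but the argument is the same.
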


\begin{proof}
Let $\tau$ be the set of all open sets of $X$ and fix a strategy $\sigma$ for player one in $G^o_p(\kappa)$. Let $M$ be a $<\kappa$-closed elementary submodel of $H(\theta)$, for some large enough regular $\theta$, such that $X, \tau \in M$, $|M| \leq 2^{<\kappa}$.

\noindent {\bf Claim.} $X \cap M$ is dense in $X$.

\begin{proof}[Proof of Claim]
Play a game of $G^o_p(\omega_1)$ where the first player plays according to $\sigma$ and the second player picks all its points in $M$. In other words, let $\alpha<\kappa$, and suppose that at inning $\beta < \alpha$, player one picked non-empty open set $U_\beta \in M$ and player two picked a point $x_\beta \in U_\beta \cap M$. At inning $\alpha$, player one plays non-empty open set $U_\alpha=\sigma((x_\beta: \beta < \alpha))$ which is an element of $M$ by $\kappa$-closedness and player two plays any point $x_\alpha \in U_\alpha \cap M$. Since $\sigma$ is a winning strategy for player one we must have that $\{x_\alpha: \alpha <\kappa\}$ is a dense set. But $\{x_\alpha: \alpha < \kappa \} \subset X \cap M$, and hence the claim.
\renewcommand{\qedsymbol}{$\triangle$}
\end{proof}

 We now claim that $\tau \cap M$ is a $\pi$-base. Suppose this is not the case, and let $V$ be an open set such that $U \nsubseteq V$ for every $U \in \tau \cap M$. By regularity of $X$ we can actually assume that $U \nsubseteq \overline{V}$, for every $U \in \tau \cap M$. Now play a game of $G^o_p(\kappa)$ in a similar way as in the above claim, with the only difference that at inning $\alpha$ player two picks a point $x_\alpha \in U_\alpha \setminus \overline{V} \cap M$. Since player one is playing according to $\sigma$, we again have that $\{x_\alpha: \alpha < \kappa \}$ is dense, but this is impossible, since $V \cap \{x_\alpha: \alpha < \kappa \} = \emptyset$.
\end{proof}

\begin{corollary}
(Scheepers) Let $X$ be a regular space. Then $X$ has a countable $\pi$-base if and only if player two has a winning strategy in $G^\omega_1(\mathcal{D}, \mathcal{D})$.
\end{corollary}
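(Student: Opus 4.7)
The plan is to derive this as an immediate consequence of the preceding theorem in one direction, and by a direct one-line strategy in the other. Specifically, the nontrivial direction (winning strategy $\Rightarrow$ countable $\pi$-base) is just the instantiation of the previous theorem at $\kappa=\omega$: since $2^{<\omega}=\aleph_0$, a winning strategy for player II in $G^\omega_1(\mathcal{D},\mathcal{D})$ forces $\pi w(X)\leq\aleph_0$.

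For the converse, I would fix an enumeration $\{P_n:n<\omega\}$ of a countable $\pi$-base of $X$ and define a strategy $\sigma$ for player II in $G^\omega_1(\mathcal{D},\mathcal{D})$ by setting, at inning $n$,
\[
\sigma(D_0,\ldots,D_n) := \text{any point of } D_n\cap P_n.
\]
This intersection is non-empty because $D_n$ is dense and $P_n$ is a non-empty open set, so $\sigma$ is well defined. Given any play of the game in which player II follows $\sigma$, the resulting set of chosen points meets every $P_n$, and since $\{P_n:n<\omega\}$ is a $\pi$-base, it meets every non-empty open subset of $X$ and is therefore dense. Hence $\sigma$ is winning.

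I do not anticipate any substantive obstacle: the forward direction is a two-line construction and does not even use regularity (so regularity enters only through its role in the preceding theorem). The main conceptual content has already been absorbed into that theorem; the corollary only needs to observe that for $\kappa=\omega$ the cardinal bound $2^{<\kappa}$ collapses to $\aleph_0$, and to supply the trivial converse strategy above.
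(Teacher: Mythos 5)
Your proposal is correct and matches the paper's (implicit) argument: the paper also obtains the nontrivial direction by specializing the preceding theorem to $\kappa=\omega$, where $2^{<\omega}=\aleph_0$, and the converse is exactly the standard strategy of picking at inning $n$ a point of $D_n\cap P_n$ for a fixed countable $\pi$-base $\{P_n: n<\omega\}$, which needs no regularity. Nothing further is required.
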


\section{Acknowledgements}
The first listed author was partially supported by FAPESP (2013/05469-7) and by 
Austrian Science Funds (FWF) through the grant M1244-N13. He wishes to thank KGRC fellows 
for the great hospitality. The second named author was partially supported by FAPESP 
postdoctoral grant 2013/14640-1,
 \emph{Discrete sets and cardinal invariants in set-theoretic topology}. 
Part of the work for the paper was carried out when he visited the third author
 at the KGRC in Vienna and when he visited the first author at the ICMC in Sao Carlos. 
He wishes to thank his colleagues in Vienna and Sao Carlos for their warm hospitality and 
the University of Vienna and FAPESP for financial support.
The third-listed author  thanks the Austrian Academy of Sciences and the
 Austrian Science Funds (FWF) for  generous support through the APART Program 
and grants I 1209-N25, M 1244-N13, respectively.

\end{document}